\newtheorem{lemma}{Lemma}
\newtheorem{thm}{Theorem}
\newtheorem{prop}{Proposition}
\newtheorem{cor}{Corollary}
\newcommand{\dbarb}{\overline{\partial}_b}
\newcommand{\boxb}{\square_b}
\newcommand{\Zbar}{\overline{Z}}
\begin{document}

\title[Sobolev inequalities for $(0,q)$ forms on CR manifolds]{Sobolev Inequalities for $(0,q)$ forms\\ on CR manifolds of finite type}
\author{Po-Lam Yung}
  \address{Department of Mathematics \\ Princeton University \\ NJ 08544}
  \email{pyung@math.princeton.edu}

\begin{abstract}
Let $M^{2n+1}$ ($n \geq 2$) be a compact pseudoconvex CR manifold of finite commutator type whose $\dbarb$ has closed range in $L^2$ and whose Levi form has comparable eigenvalues. We prove a Gagliardo-Nirenberg inequality for the $\dbarb$ complex for $(0,q)$ forms when $q \ne 1$ nor $n-1$. We also prove an analogous inequality when $M$ satisfies condition $Y(q)$. The main technical ingredient is a new kind of $L^1$ duality inequality for vector fields that satisfy Hormander's condition. 
\end{abstract}

\maketitle

\section{Introduction}

Recently Bourgain-Brezis \cite{MR2293957} and Lanzani-Stein \cite{MR793239} established the following $L^1$ Sobolev inequality (or Gagliardo-Nirenberg inequality) for differential forms: If $u$ is a smooth compactly supported $q$ form on $\mathbb{R}^n$ and $q \ne 1$ nor $n-1$, then 
$$
\|u\|_{L^{\frac{n}{n-1}}(\mathbb{R}^n)} \lesssim \|du\|_{L^1(\mathbb{R}^n)} + \|d^* u\|_{L^1(\mathbb{R}^n)}
$$ 
where $d$ is the Hodge de-Rham differential operator and $d^*$ is its adjoint under the flat Euclidean metric. This generalizes the classical Gagliardo-Nirenberg inequality for functions, which corresponds to the case $q=0$ or $n$. This is, however, a very remarkable inequality when $2 \leq q \leq n-2$, for while the corresponding inequality when $L^1$ is replaced by $L^p$ ($1 < p < n$) follows easily from the classical Calderon-Zygmund theory of singular integrals, the Calderon-Zygmund theory breaks down for $L^1$. In fact a simple example shows that the inequality is false when $q = 1$ or $n-1$ and $n \geq 2$. At the heart of this is a new kind of $L^1$ duality inequality, which says that if $f = (f_1, \dots, f_n)$ is a compactly supported smooth vector field on $\mathbb{R}^n$ and $\text{div} f = g$, 
then for any $\Phi \in C^{\infty}_c(\mathbb{R}^n)$, 
\begin{equation}\label{eq:Eucldivcurl}
\left|\int_{\mathbb{R}^n} f_1(x) \Phi(x) dx \right| \lesssim \|f\|_{L^1} \|\nabla \Phi\|_{L^n} + \|g\|_{L^1} \|\Phi\|_{L^n}.
\end{equation}
(See van Schaftingen \cite{MR2078071}.) This can be thought of as a remedy to the failure of the Sobolev embedding of $W^{1,n}$ into $L^{\infty}$, for if the embedding holds, the inequality would become trivial. Note also that this inequality does not follow from classical compensated compactness arguments.

More recently, Chanillo and van Schaftingen \cite{MR2511628} proved an analog of this inequality on a general homogeneous group: If $G$ is a homogeneous group of homogeneous dimension $Q$ and $X_1, \dots, X_n$ is a basis of left-invariant vector fields of degree 1 on $G$, then for any functions $f_1, \dots, f_n, g \in C^{\infty}_c(G)$ which satisfies $X_1 f_1 + \dots + X_n f_n = g$ and any $\Phi \in C^{\infty}_c(G)$, we have 
$$
\left|\int_{G} f_1(x) \Phi(x) dx \right| \lesssim \|f\|_{L^1(G)} \|\nabla_b \Phi\|_{L^Q(G)} + \|g\|_{L^1(G)} \|\Phi\|_{L^Q(G)} 
$$
where $\nabla_b \Phi = (X_1 \Phi, \dots, X_n \Phi)$. Our first result generalizes this:

\begin{thm}\label{thm:PLA}
Let $X_1, \dots, X_n$ be smooth real vector fields in a neighborhood of $0$ in $\mathbb{R}^N$. Suppose they are linearly independent at 0 and their commutators of length $\leq r$ span the tangent space at $0$. Let $V_j(x)$ be the span of the commutators of $X_1, \dots, X_n$ of length $\leq j$ at $x$, and let $Q$ be defined by 
$$
Q := \sum_{j=1}^r j n_j, \qquad n_j := \dim V_j(0) - \dim V_{j-1}(0).
$$
Then there exists a neighborhood $U$ of $0$ and $C > 0$ such that if 
$$
X_1 f_1 + \dots + X_n f_n = g
$$ 
on $U$ with $f_1, \dots, f_n, g \in C^{\infty}_c(U)$ and $\Phi \in C^{\infty}_c (U)$, then 
$$
\left|\int_{U} f_1(x) \Phi(x) dx \right| \leq C \left(\|f\|_{L^1(U)} \|\Phi\|_{NL_1^Q(U)} + \|g\|_{L^1(U)} \|\Phi\|_{L^Q(U)}\right)
$$ 
where $\|\Phi\|_{NL_1^Q(U)} = \|\nabla_b \Phi\|_{L^Q(U)} + \|\Phi\|_{L^Q(U)},$ and $\nabla_b \Phi = (X_1 \Phi, \dots, X_n \Phi)$.
\end{thm}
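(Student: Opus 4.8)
The plan is to reduce the estimate to the model case of a graded nilpotent Lie group --- where it is exactly the theorem of Chanillo and van Schaftingen \cite{MR2511628} quoted above --- by means of the Rothschild--Stein / Nagel--Stein--Wainger approximation of H\"ormander vector fields by their osculating nilpotent group. Here $Q$ is precisely the homogeneous dimension of the graded nilpotent ``tangent group'' $G$ attached to $X_1,\dots,X_n$ at $0$: its Lie algebra is $\mathfrak{g}=\bigoplus_{j=1}^{r}\mathfrak{g}_j$ with $\dim\mathfrak{g}_j=n_j$ and degree-one generators $Y_1,\dots,Y_n$, so that $Q=\sum_j j\,n_j$ is exactly the exponent for which all of $\int_U f_1\Phi$, $\|f\|_{L^1}$, $\|g\|_{L^1}$, $\|\nabla_b\Phi\|_{L^Q}$ and $\|\Phi\|_{L^Q}$ scale with the same power under the anisotropic dilations $\delta_\lambda$ of $G$. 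The inequality is thus dilation-invariant on $G$, which (together with the Euclidean examples) explains why this value of $Q$ is forced.

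Concretely, I would first choose privileged (exponential) coordinates at $0$ in which $X_j=Y_j+E_j$, with $Y_j$ the left-invariant degree-one generator of $G$ and $E_j$ a vector field of strictly higher homogeneous degree, so that conjugating $X_j$ by $\delta_{1/\lambda}$ gives $Y_j+\lambda R_j^{\lambda}$ with $R_j^{\lambda}$ bounded in $C^\infty_{\mathrm{loc}}$ uniformly for small $\lambda$; when $X_1,\dots,X_n$ are not equiregular one shrinks $U$, does this at each point, and uses that a frame of length $\le r$ brackets spanning at $0$ persists nearby to get $|B(x,\delta)|\gtrsim\delta^{Q}$ locally, so that $L^Q\hookrightarrow L^{Q(x)}$ on $U$ and it suffices to prove the bound with the local homogeneous dimension $Q(x)\le Q$. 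The heart of the matter is then to transplant the ``chain'' argument of \cite{MR2511628}: one evaluates $\int_U f_1\Phi$ by integrating the relation $\sum_j X_j f_j=g$ along a grid of approximate integral curves of $X_1$ cut out by the $\delta_\lambda$-boxes, redistributing the $X_j$ onto $\Phi$; the contribution of $f$ produces averages of $\nabla_b\Phi$ and that of $g$ produces averages of $\Phi$ itself, each controlled by the corresponding $L^Q$ norm because these curves are one-dimensional inside a space of homogeneous dimension $Q$ --- which is exactly the mechanism that survives the failure of $NL_1^Q\hookrightarrow L^\infty$, and also the reason $g$ is accompanied only by $\|\Phi\|_{L^Q}$, with no derivative. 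Since $E_j$ is of higher homogeneous degree, running this argument with $Y_j$ in place of $X_j$ costs only error terms of strictly better order in the $\delta_\lambda$-scaling, which are absorbed into the right-hand side. Equivalently --- and perhaps cleaner for invoking \cite{MR2511628} as a black box --- one may argue by contradiction: from a sequence of near-counterexamples on shrinking neighborhoods of $0$, rescale by $\delta_{1/\lambda_k}$ with $\lambda_k\to 0$; the rescaled vector fields converge in $C^\infty_{\mathrm{loc}}$ to $Y_1,\dots,Y_n$, the rescaled $f^{(k)}$ converge weakly-$*$ to a vector-valued measure, the rescaled $\Phi^{(k)}$ weakly to a function with $\nabla_b\Phi,\Phi\in L^Q$, and the constraint passes to $\sum_j Y_j f_j=g$ in the limit, contradicting \cite{MR2511628} on $G$.

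I expect the main obstacle to be the error analysis --- the place where the general H\"ormander setting genuinely costs more than the homogeneous-group case. One must show that the curvature terms created by the non-commuting $X_j$, and the discrepancies caused by non-equiregularity of the flag $\{V_j(x)\}$, are of strictly lower homogeneous order than the main terms, which requires the full strength of the Nagel--Stein--Wainger ball--box description (for the geometry of the $\delta_\lambda$-boxes and the volume bounds) and of the Rothschild--Stein approximation/parametrix construction (to identify the osculating group and to control the remainders $E_j$ uniformly in scale). A second, more delicate point is that for the term involving $g$ one cannot afford the naive reduction of solving $\sum_j X_j u_j=g$ and estimating $\int u_1\Phi$ separately, since that would require the false endpoint bound $\|\Phi\|_{L^\infty}\lesssim\|\Phi\|_{NL_1^Q}$; instead $g$ must be kept inside the chain argument so that it only ever meets averages of $\Phi$, which the geometry controls by $\|\Phi\|_{L^Q}$ alone.
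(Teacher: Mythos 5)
Your high-level framing is right: the paper does approximate the $X_j$ by left-invariant degree-one generators of a graded nilpotent group and then runs a van Schaftingen / Chanillo--van Schaftingen "freeze one variable and integrate by parts" scheme. But the central technical step that makes the general (non-free, non-equiregular) case work is missing from your plan, and the gap is exactly the part the introduction flags as "the main new ingredient."

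You propose to write $X_j = Y_j + E_j$ in privileged coordinates at $0$, with $Y_j$ left-invariant on a tangent group $G$ of dimension $N$ and $E_j$ of strictly higher homogeneous degree. This works at the single base point $0$, but the decomposition lemma (the analogue of Chanillo--van Schaftingen's Lemma, here Lemma~\ref{lem:decomp}) needs the approximation to hold \emph{uniformly at every point} $\xi$ of the neighborhood $U$, because one takes exponential coordinates centred at a moving $\xi$ in the convolution-type construction of $\Phi_2^a$. In general there is no single $N$-dimensional homogeneous group whose left-invariant fields approximate $X_1,\dots,X_n$ at every nearby point to the right non-isotropic order; the relations among the commutators of $X_1,\dots,X_n$ are not compatible with a fixed graded Lie algebra structure unless the $X_j$ are already free up to step $r$. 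That is precisely why Rothschild--Stein lift to $\tilde X_j$ on $\mathbb{R}^{\tilde N}$ and approximate by the free group: the lifted fields admit a \emph{uniform} approximation (condition (\ref{requirement}) in the paper holds for all $\tilde\xi\in\tilde U_1$). But the lifting replaces $Q$ with the homogeneous dimension $\tilde Q>Q$ of the free group, and naively repeating the group argument after lifting would give only the (weaker) exponent $\tilde Q$. The paper's Lemma~\ref{lem:maximal} is devoted exactly to clawing back the correct exponent $Q$ by integrating out the added variables, using the ball--box structure to show the added variables contribute a factor of $\lambda^{\tilde Q-Q}$. Your plan has no substitute for this step; without it the exponent in the theorem is wrong. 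Your remark that ``$L^Q\hookrightarrow L^{Q(x)}$ with $Q(x)\le Q$'' does not repair this: the difficulty is not that $Q$ varies from point to point but that the lifting inflates the homogeneous dimension at every point.

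The alternative compactness/blow-up route you sketch has its own problems. The estimate is a local inequality on a fixed $U$, not invariant under the anisotropic dilations, so a blow-up at $0$ only recovers information at a single point, whereas a counterexample could concentrate at a non-central point or at no single scale. More seriously, the weak-$*$ limit of $f^{(k)}$ is merely a vector-valued measure and the limit of $\Phi^{(k)}$ need only lie in $L^Q$; the Chanillo--van Schaftingen theorem is proved for smooth compactly supported data, and the whole point of these $L^1$ duality inequalities is that they are \emph{not} amenable to the usual compactness/compensated-compactness machinery (the paper notes this failure already in the Euclidean case). So the contradiction step is not justified. In short: the structure of the argument you envision is the right one, but you have to lift before you can approximate, and then you have to pay for the lifting with a careful integration in the extra variables -- and that pair of steps, rather than any formal rescaling, is the actual content of the proof.
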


Theorem~\ref{thm:PLA} allows us to study the $\dbarb$ complex of two classes of CR manifolds of finite commutator type, and prove a Gagliardo-Nirenberg inequality for $(0,q)$ forms that involves the $\dbarb$ complex. A CR manifold $M$ is said to be of finite commutator type $m$ at a point $x$ if the brackets of real and imaginary parts of the (1,0) vector fields of length $\leq m$ span the tangent space of $M$ at $x$; and a pseudoconvex CR manifold $M^{2n+1}$ is said to satisfy condition $D(q)$ if there is a constant $C > 0$ such that for any point $x \in M$, the sum of any $q$ eigenvalues of the Levi form at $x$ is bounded by $C$ times any other such sum, for $1 \leq q \leq n/2$.
The condition $D(1)$ is usually loosely referred to as that $M$ has comparable Levi eigenvalues, because this condition is simply that for some $C > 0$, for any $x \in M$ and any eigenvalues $\lambda_1(x)$, $\lambda_2(x)$ of the Levi form at $x$, we have $\lambda_1(x) \leq C \lambda_2(x)$. 

\begin{thm}\label{thm:PLB}
Let $M$ be a compact orientable pseudoconvex CR manifold of real dimension $2n+1 \geq 5$, for which the range of $\dbarb$ on $(0,q)$ forms is closed in $L^2$ for all $q$. Suppose that
\begin{enumerate}[(i)]
\item $M$ is of finite commutator type $m$ at every point, and
\item $M$ satisfies condition $D(q_0)$ for some $1 \leq q_0 \leq n/2$.
\end{enumerate}
Let $Q = 2n+m.$ Then 
\begin{enumerate}[(a)]
\item
If $q_0 \leq q \leq n-q_0$ and $q \ne 1$ nor $n-1$, then for any smooth $(0,q)$ form $u$ on $M$ that is orthogonal to the kernel of $\boxb$, we have
$$
\|u\|_{L^{\frac{Q}{Q-1}}(M)} \lesssim \|\dbarb u\|_{L^1(M)} + \|\dbarb^* u\|_{L^1(M)}.
$$
\item
For any smooth $(0,q_0-1)$ form $v$ orthogonal to the kernel of $\dbarb$, we have 
$$
\|v\|_{L^{\frac{Q}{Q-1}}(M)} \lesssim \|\dbarb v\|_{L^1(M)}.
$$
\item
For any smooth $(0,n-q_0+1)$ form $w$ orthogonal to the kernel of $\dbarb^*$, we have 
$$
\|w\|_{L^{\frac{Q}{Q-1}}(M)} \lesssim \|\dbarb^* w\|_{L^1(M)}.
$$
\end{enumerate}
\end{thm}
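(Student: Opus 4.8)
The plan is to deduce all three inequalities from Theorem~\ref{thm:PLA}, with the Kohn--Hodge theory of $\boxb$ supplying the algebraic skeleton and the subelliptic regularity of $\boxb$ supplying the analytic input; I describe part (a), since (b) and (c) are degenerate cases of the same argument. Since $\dbarb$ has closed range on $(0,j)$ forms for every $j$ and $M$ is compact, $\boxb=\dbarb\dbarb^*+\dbarb^*\dbarb$ admits a bounded Neumann operator $N$ on the orthogonal complement of $\ker\boxb$, commuting with $\dbarb$, $\dbarb^*$ and the harmonic projection $H$. Let $u\perp\ker\boxb$ be a smooth $(0,q)$ form; by $L^{Q/(Q-1)}$--$L^Q$ duality it suffices to bound $|\langle u,\Phi\rangle|$ for smooth $(0,q)$ forms $\Phi$ with $\|\Phi\|_{L^Q(M)}\le 1$. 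Set $\psi:=\dbarb^* N\Phi$ (a $(0,q-1)$ form) and $\phi:=\dbarb N\Phi$ (a $(0,q+1)$ form). Writing $\Phi=\boxb N\Phi+H\Phi$ and using $u\perp\ker\boxb$, the splitting $\boxb=\dbarb\dbarb^*+\dbarb^*\dbarb$, and integration by parts on the closed manifold $M$,
$$\langle u,\Phi\rangle=\langle\dbarb u,\phi\rangle+\langle\dbarb^* u,\psi\rangle,$$
so everything reduces to estimating these two pairings.

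The analytic ingredient I will need — and the step I expect to be the main obstacle — is a \emph{sharp} subelliptic estimate for $N$: under hypotheses (i) and (ii), $N$ should gain two nonisotropic derivatives in the $L^Q$ scale attached to a local frame $X_1,\dots,X_{2n}$ of the CR distribution (which is bracket-generating of step $\le m$, so the associated homogeneous dimension is at most $Q=2n+m$), yielding $\|\phi\|_{NL_1^Q(M)}+\|\psi\|_{NL_1^Q(M)}\lesssim\|\Phi\|_{L^Q(M)}\le 1$. This is exactly where the finite commutator type $m$ and the comparable-eigenvalue condition $D(q_0)$ enter: on the admissible degrees $q_0-1\le\bullet\le n-q_0+1$ they force $\boxb$ to behave like a nonisotropically elliptic operator of order two with homogeneous dimension $Q$, so that $\boxb^{-1}$ is, up to acceptable errors, a nonisotropic fractional integral of order two and the anisotropy of the Levi form does not spoil the scaling. (In the variant hypothesis $Y(q)$ this is the classical subelliptic estimate, needed only at the single degree $q$.) Establishing this estimate — building on Kohn's algorithm for finite type together with the comparability hypothesis — is the technical heart of the matter; granting it, the rest is bookkeeping plus one application of Theorem~\ref{thm:PLA}.

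Next I would realize each pairing in the form required by Theorem~\ref{thm:PLA}. Cover $M$ by finitely many charts $U_i$ each carrying an orthonormal $(0,1)$ frame $\Zbar_1,\dots,\Zbar_n$; let $X_1,\dots,X_{2n}$ be the real and imaginary parts of the $\Zbar_k$, and fix a partition of unity $\sum_i\chi_i^2=1$ with $\operatorname{supp}\chi_i\subset U_i$. Expand $\langle\dbarb u,\phi\rangle=\sum_K\langle(\dbarb u)_K,\phi_K\rangle$ in such a frame. The identity $\dbarb(\dbarb u)=0$, written out, gives for each multiindex $M$ of length $q+2$ a relation $\sum_{m\in M}\pm\,\Zbar_m(\dbarb u)_{M\setminus m}=g_M$ in which $g_M$ is a smooth \emph{zeroth}-order expression in the components of $\dbarb u$, so $\|g_M\|_{L^1}\lesssim\|\dbarb u\|_{L^1}$. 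Since $q\ne n-1$ (hence $q+1\le n-1$), each component $(\dbarb u)_K$ occurs — differentiated by a single $\Zbar_{m_0}=X_{2m_0-1}+iX_{2m_0}$ with $m_0\notin K$ — in the relation for $M=K\cup\{m_0\}$; multiplying by $\chi_i$ thus exhibits $\chi_i(\dbarb u)_K$ as one coefficient $f_j$ in an equation $\sum_j X_j f_j=\tilde g$ on $U_i$ whose every coefficient is $\pm\chi_i$ times a component of $\dbarb u$ and with $\|\tilde g\|_{L^1}\lesssim\|\dbarb u\|_{L^1}$. It is essential here that we localize $\dbarb u$ and not $u$: the cutoff commutator terms $(X_j\chi_i)(\dbarb u)_{(\cdot)}$ then cost only $\|\dbarb u\|_{L^1}$, and no term of the form $\|u\|_{L^{Q/(Q-1)}}$ is ever produced. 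Applying Theorem~\ref{thm:PLA} on $U_i$ with test function $\chi_i\overline{\phi_K}\in C_c^\infty(U_i)$ and summing over $i$ and $K$ gives $|\langle\dbarb u,\phi\rangle|\lesssim\|\dbarb u\|_{L^1}\|\phi\|_{NL_1^Q(M)}\lesssim\|\dbarb u\|_{L^1}$. The pairing $\langle\dbarb^* u,\psi\rangle$ is treated identically using $(\dbarb^*)^2=0$, which produces divergence relations for the components of $\dbarb^* u$; here the restriction used is $q\ne 1$ (so a multiindex of length $q-1$ can be shortened, the relevant relations having length $q-2\ge 0$), giving $|\langle\dbarb^* u,\psi\rangle|\lesssim\|\dbarb^* u\|_{L^1}$. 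Adding the two and taking the supremum over $\Phi$ proves (a).

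Finally, (b) and (c) are the one-sided cases. If $v\perp\ker\dbarb$ then $\dbarb^* v=0$, because the range of $\dbarb$ is contained in $\ker\dbarb$ and hence $v$ is orthogonal to it; the decomposition above then collapses to $\langle v,\Phi\rangle=\langle\dbarb v,\phi\rangle$, estimated exactly like the first pairing in (a) via $\dbarb(\dbarb v)=0$ — no excluded degree is needed, only $q_0\le n-1$. Symmetrically, $w\perp\ker\dbarb^*$ forces $\dbarb w=0$, so $\langle w,\Phi\rangle=\langle\dbarb^* w,\psi\rangle$, estimated like the second pairing in (a) via $(\dbarb^*)^2=0$. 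In both cases the sharp subelliptic estimate for $N$ is now invoked at degree $q_0-1$ (respectively $n-q_0+1$), which again lies in the admissible range supplied by $D(q_0)$.
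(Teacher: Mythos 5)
Your overall strategy is the same as the paper's: dualize against $(0,q)$ forms $\Phi$ in $L^Q$, use the Hodge decomposition $\langle u,\Phi\rangle=\langle\dbarb u,\dbarb N\Phi\rangle+\langle\dbarb^* u,\dbarb^* N\Phi\rangle$ (the paper writes $K_q$ for your $N$), extract divergence-form relations for the components of $\dbarb u$ and $\dbarb^* u$ from $\dbarb^2=0$ and $(\dbarb^*)^2=0$ in local frames with a partition of unity, apply Theorem~\ref{thm:PLA} with the real and imaginary parts of $\Zbar_1,\dots,\Zbar_n$ (non-isotropic dimension at most $Q=2n+m$), and close the estimate with maximal $L^Q$ subelliptic regularity of the solution operators. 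That last ingredient, which you flag as the "technical heart" and then grant, is not something to be re-proved here: it is Koenig's theorem on maximal $L^p$ estimates under finite commutator type and condition $D(q_0)$ (Theorem 5.12 and Corollary 5.13 of \cite{MR1879002}), which the paper simply cites; for part (a), where the degree $q$ lies in $[q_0,n-q_0]$, the estimate you assume is exactly what that theorem provides, so your argument for (a) matches the paper's.

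The genuine gap is in the degrees at which you invoke that regularity in parts (b) and (c). Condition $D(q_0)$ yields maximal estimates for the relative solution operator of $\boxb$ only in degrees $q_0\le q\le n-q_0$; your claimed admissible range $q_0-1\le\cdot\le n-q_0+1$ is off by one at both ends, and those endpoint degrees are precisely the ones you use for (b) and (c). A two-derivative gain for $N$ at degree $q_0-1$ is not supplied by the hypotheses (already for $q_0=1$ this would be a maximal estimate for the Neumann operator on functions, where $\ker\boxb$ is the infinite-dimensional space of CR functions; nothing of the sort is asserted there). Fortunately, your reduction $\langle v,\Phi\rangle=\langle\dbarb v,\dbarb N\Phi\rangle$ only needs a one-derivative gain for the composite $\dbarb N_{q_0-1}=N_{q_0}\dbarb$, i.e.\ for the relative fundamental solution of $\dbarb^*\colon L^2(\Lambda^{0,q_0})\to L^2(\Lambda^{0,q_0-1})$, whose regularity is governed by the admissible degree $q_0$; this is exactly how the paper argues, invoking Koenig's Corollary 5.13 for $G'_{q_0-1}$ (and symmetrically $G_{n-q_0}$ for part (c)). So (b) and (c) are repairable by replacing your appeal to "the sharp estimate for $N$ at degree $q_0-1$ (resp.\ $n-q_0+1$)" with the regularity of these relative fundamental solutions, but as written they rest on an estimate that is not available under the stated hypotheses.
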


In particular when $q_0 = 1$, i.e. when $M$ has comparable Levi eigenvalues, then $$\|v\|_{L^{\frac{Q}{Q-1}}(M)} \lesssim \|\dbarb v \|_{L^1(M)}$$ for any function $v$ orthogonal to the kernel of $\dbarb$, which can be thought of as a Gagliardo-Nirenberg inequality for $\dbarb$. Also, in this case 
$$\|u\|_{L^{\frac{Q}{Q-1}}(M)} \lesssim \|\dbarb u\|_{L^1(M)} + \|\dbarb^* u\|_{L^1(M)}$$ 
for any smooth $(0,q)$ forms $u$ orthogonal to the kernel of $\boxb$, when $q \ne 1$ nor $n-1$.

Finally, a CR manifold $M^{2n+1}$ is said to satisfy condition $Y(q)$ if at every point the Levi form has $\max(q+1,n-q+1)$ eigenvalues of the same sign or $\min(q+1,n-q+1)$ pairs of eigenvalues of opposite signs. Note that all such manifolds are necessarily of finite commutator type 2. 

\begin{thm}\label{thm:PLC}
Let $M^{2n+1}$ be a compact orientable CR manifold that satisfies condition $Y(q)$ for some $0 \leq q \leq n$, and $Q = 2n+2$. 
\begin{enumerate}[(a)]
\item If $q \ne 1$ nor $n-1$, and $u$ is a smooth $(0,q)$ form orthogonal to the kernel of $\boxb$, $$\|u\|_{L^{\frac{Q}{Q-1}}(M)} \lesssim \|\dbarb u\|_{L^1(M)} + \|\dbarb^* u\|_{L^1(M)}.$$
\item If $q \ne 0$ nor $n$, and $v$ is a smooth $(0,q-1)$ form orthogonal to the kernel of $\dbarb$, $$\|v\|_{L^{\frac{Q}{Q-1}}(M)} \lesssim \|\dbarb v\|_{L^1(M)}.$$
\item If $q \ne 0$ nor $n$, and $w$ is a smooth $(0,q+1)$ form orthogonal to the kernel of $\dbarb^*$, $$\|w\|_{L^{\frac{Q}{Q-1}}(M)} \lesssim \|\dbarb^* w\|_{L^1(M)}.$$
\end{enumerate}
\end{thm}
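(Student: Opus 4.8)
\emph{Overall strategy.} The plan is to run the same scheme as for Theorem~\ref{thm:PLB}, which under $Y(q)$ becomes simpler because condition $Y(q)$ already hands over the full analytic toolkit, now with finite commutator type $m=2$ and hence $Q=2n+2$. First I would record the standard consequences of $Y(q)$ on a compact orientable $M$: by Kohn's subelliptic estimates $\dbarb$ has closed range on $(0,q)$ forms, $\mathcal H^q:=\ker\boxb=\ker\dbarb\cap\ker\dbarb^*$ is finite dimensional, there is a Green operator $N=N_q$ with $\boxb N=N\boxb=I-H$ ($H$ the orthogonal projection onto $\mathcal H^q$), and $\boxb$ is subelliptic with a gain of two derivatives in the non-isotropic Folland--Stein scale $NL_s^p$ ($1<p<\infty$); consequently $\dbarb N$, $\dbarb^* N$, $N\dbarb$, $N\dbarb^*$ all carry $L^Q=NL_0^Q$ into $NL_1^Q$. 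I also use the duality between the $\dbarb$-complex on $(0,\bullet)$ forms and on $(0,n-\bullet)$ forms, realized by a fibrewise-unitary operator $\star$, bounded and invertible on each $NL^p_s$, which sends $\dbarb$-closed forms to $\dbarb^*$-closed forms at complementary degrees and preserves the $\mathcal H^\bullet$. Finally, on any coordinate patch a unitary frame $\Zbar_1,\dots,\Zbar_n$ of $T^{0,1}M$ yields a real frame $X_1,\dots,X_{2n}$ of the contact distribution, linearly independent at each point with commutators of length $\le 2$ spanning; in the notation of Theorem~\ref{thm:PLA} this gives $n_1=2n$, $n_2=1$, so $Q=2n+2$.

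\emph{The key estimate.} The engine is the following consequence of Theorem~\ref{thm:PLA}: for $1\le k\le n$, if $g$ is a smooth $\dbarb^*$-closed $(0,k)$ form on $M$ and $\Psi$ is any smooth $(0,k)$ form, then $|\langle g,\Psi\rangle|\lesssim\|g\|_{L^1(M)}\|\Psi\|_{NL_1^Q(M)}$. To obtain it I would cover $M$ by finitely many coordinate patches as above, take a subordinate partition of unity $\{\chi_\alpha\}$ with each $\mathrm{supp}\,\chi_\alpha$ inside the neighborhood produced by Theorem~\ref{thm:PLA}, and expand $\langle\chi_\alpha g,\Psi\rangle=\sum_{|I|=k}\int\chi_\alpha g_I\overline{\Psi_I}$ in the local frame. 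The point is that each scalar equation $(\dbarb^* g)_{I'}=0$ (over increasing $(k-1)$-multi-indices $I'$; there is at least one because $k\ge1$) reads $\sum_j\pm\Zbar_j^* g_{jI'}+(\text{zeroth order in the }g_J)=0$, and writing $\Zbar_j^*=-Z_j+(\text{order }0)$ with $Z_j=\tfrac12(X_j+iX_{j+n})$ and separating the real and imaginary parts of both the equation and the coefficients $g_J$ turns it into two genuine divergence relations $\sum_{\mu=1}^{2n}X_\mu f_\mu=\tilde g$ on $U_\alpha$, in which every real component of every $g_I$ appears as some $f_\mu$ multiplied by a nonzero constant and $\|\tilde g\|_{L^1(U_\alpha)}\lesssim\|g\|_{L^1(U_\alpha)}$. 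Multiplying by $\chi_\alpha$ (which only adds $L^1$-controlled terms on the right) and relabeling so that the field in question is $X_1$, Theorem~\ref{thm:PLA} bounds $\int(\chi_\alpha\mathrm{Re}\,g_I)\Phi$ and $\int(\chi_\alpha\mathrm{Im}\,g_I)\Phi$ by $C\|g\|_{L^1(U_\alpha)}(\|\Phi\|_{NL_1^Q(U_\alpha)}+\|\Phi\|_{L^Q(U_\alpha)})$ for every $\Phi\in C_c^\infty(U_\alpha)$; taking $\Phi$ to be compactly supported localizations of $\mathrm{Re}\,\Psi_I$ and $\mathrm{Im}\,\Psi_I$ and summing over $I$, over $\alpha$ and over real/imaginary parts gives the estimate.

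\emph{Deducing the three inequalities.} For (a): if $Hu=0$ I would write $u=\dbarb\theta_0+\dbarb^*\tau_0$ with $\theta_0=\dbarb^* Nu$, $\tau_0=\dbarb Nu$, and set $\theta=\dbarb\theta_0$, $\tau=\dbarb^*\tau_0$, so that $\theta$ is $\dbarb$-closed with $\dbarb^*\theta=\dbarb^* u$ and $\theta\perp\mathcal H^q$, while $\tau$ is $\dbarb^*$-closed with $\dbarb\tau=\dbarb u$ and $\tau\perp\mathcal H^q$. By $L^{Q/(Q-1)}$--$L^Q$ duality it suffices to bound $|\langle\theta,\phi\rangle|$ by $C\|\dbarb^* u\|_{L^1}\|\phi\|_{L^Q}$ and $|\langle\tau,\phi\rangle|$ by $C\|\dbarb u\|_{L^1}\|\phi\|_{L^Q}$ over smooth $(0,q)$ forms $\phi$. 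Decomposing $\phi=\dbarb(\dbarb^* N\phi)+\dbarb^*(\dbarb N\phi)+H\phi$ and using $\dbarb\theta=0$, $\theta\perp\mathcal H^q$ collapses the first pairing to $\langle\theta,\phi\rangle=\langle\dbarb^*\theta,\dbarb^* N\phi\rangle$; since $\dbarb^*\theta$ is $\dbarb^*$-closed of degree $q-1\ge1$ (this is where $q\ne1$ enters) and $\|\dbarb^* N\phi\|_{NL_1^Q}\lesssim\|\phi\|_{L^Q}$, the key estimate applies. Likewise $\langle\tau,\phi\rangle=\langle\dbarb u,\dbarb N\phi\rangle=\pm\langle\star\dbarb u,\star\dbarb N\phi\rangle$ with $\star\dbarb u$ being $\dbarb^*$-closed of degree $n-q-1\ge1$ (this is where $q\ne n-1$ enters) and $\|\star\dbarb N\phi\|_{NL_1^Q}\lesssim\|\phi\|_{L^Q}$, and the key estimate applies again; adding proves (a), with $\theta=0$ when $q=0$ and $\tau=0$ when $q=n$. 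For (b): if $v\perp\ker\dbarb$ I would first check the Hodge identity $v=\dbarb^* N_q(\dbarb v)$ — since $\dbarb v\perp\mathcal H^q$ one has $\boxb N_q(\dbarb v)=\dbarb v$, hence $\dbarb^*\dbarb N_q\dbarb v=\dbarb(v-\dbarb^* N_q\dbarb v)$ is both $\dbarb$- and $\dbarb^*$-exact, so it vanishes, whence $v-\dbarb^* N_q\dbarb v$ lies in $\ker\dbarb$ yet is orthogonal to it (being $v$ minus a $\dbarb^*$-exact form), so it is $0$ — and then $\langle v,\chi\rangle=\langle\dbarb v,N_q\dbarb\chi\rangle=\pm\langle\star\dbarb v,\star N_q\dbarb\chi\rangle$ with $\star\dbarb v$ $\dbarb^*$-closed of degree $n-q\ge1$ (using $q\ne n$), $L^1$-norm $\approx\|\dbarb v\|_{L^1}$, and $\|\star N_q\dbarb\chi\|_{NL_1^Q}\lesssim\|\chi\|_{L^Q}$, so the key estimate and duality give (b). For (c): $w\perp\ker\dbarb^*$ forces $\dbarb w=0$ and $w\perp\mathcal H^{q+1}$, so $\star w$ is $\dbarb^*$-closed of degree $n-q-1$ and orthogonal to $\mathcal H^{n-q-1}$, hence orthogonal to $\ker\dbarb$; applying (b) with $q$ there replaced by $n-q$ (which needs only $Y(n-q)=Y(q)$) to $\star w$ and undoing $\star$ yields (c).

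\emph{Main obstacle.} The one delicate step is the key estimate. I expect the real work to be verifying that the coordinate-free relation $\dbarb^* g=0$ genuinely unwinds, in a local unitary frame, into a Hörmander divergence system to which Theorem~\ref{thm:PLA} literally applies: separating complex data into real, confirming that every scalar coefficient of $g$ appears attached with a nonzero constant to one of the $X_\mu$ in the real or imaginary part of at least one equation $(\dbarb^* g)_{I'}=0$, and absorbing all zeroth-order frame-curvature terms together with the partition-of-unity commutator terms into $\tilde g$ with $\|\tilde g\|_{L^1}\lesssim\|g\|_{L^1}$. Everything else is Kohn's $Y(q)$ theory and routine bookkeeping with the duality operator $\star$, exactly as in the proof of Theorem~\ref{thm:PLB} but with the finite type $m$ replaced by $2$.
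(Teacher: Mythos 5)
Your overall strategy is the paper's: Theorem~\ref{thm:PLC} is proved there by running the proof of Theorem~\ref{thm:PLB} verbatim, with Koenig's maximal subelliptic estimates replaced by the classical $Y(q)$ theory of Folland--Kohn (subelliptic $\tfrac12$-estimate, the Green operator $N=K_q$ gaining two good derivatives, so that $\dbarb N$, $\dbarb^*N$, $N\dbarb$, $N\dbarb^*$ map $L^Q$ into $NL_1^Q$). Your localization by a partition of unity, the unwinding of a closedness relation into real divergence systems for $X_1,\dots,X_{2n}$ with $n_1=2n$, $n_2=1$, $Q=2n+2$, the application of Theorem~\ref{thm:PLA}, and the Hodge-theoretic identities in (a) and (b) (which are equivalent to the paper's use of the relative fundamental solutions) are exactly the intended mechanism.

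The one genuine soft spot is your duality operator $\star$. You posit a globally defined, fibrewise-unitary, conjugate-linear map $\Lambda^{0,q}\to\Lambda^{0,n-q}$ that exactly exchanges $\dbarb$-closedness and $\dbarb^*$-closedness and preserves the harmonic spaces. On a general compact orientable CR manifold the natural duality sends $(0,q)$ forms to $(0,n-q)$ forms twisted by the line bundle $\Lambda^{0,n}$; an untwisted $\star$ requires a trivialization of that bundle, and even a frame-by-frame fibrewise definition intertwines $\dbarb$ and $\dbarb^*$ only modulo zeroth-order terms. In (a) and (b) this costs nothing: there you only need $\|\dbarb^*(\star g)\|_{L^1}\lesssim\|g\|_{L^1}$, so either state your key estimate for forms whose $\dbarb^*$ is merely $L^1$-controlled, or dispense with $\star$ altogether and argue directly from $\dbarb(\dbarb u)=0$ and $\dbarb(\dbarb v)=0$: since these forms have degree $<n$, each component has a missing index and satisfies a divergence relation --- this is precisely what the paper does inside each chart. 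In (c), however, your black-box reduction to (b) genuinely needs the exact intertwining: with only approximate intertwining one gets $\|\dbarb(\star w)\|_{L^1}\le\|\dbarb^*w\|_{L^1}+C\|w\|_{L^1}$, and the term $C\|w\|_{L^1}$ cannot be absorbed. The repair is to prove (c) directly, as the paper does via the relative fundamental solution of $\dbarb$: since $w\perp\ker\dbarb^*$ one has $w=\dbarb N_q\dbarb^*w$ (same argument as your identity in (b)), hence $\langle w,\phi\rangle=\langle\dbarb^*w,\,N_q\dbarb^*\phi\rangle$; now $\dbarb^*w$ is honestly $\dbarb^*$-closed of degree $q\ge1$ (here $q\ne 0$ enters), your key estimate applies, and $\|N_q\dbarb^*\phi\|_{NL_1^Q}\lesssim\|\phi\|_{L^Q}$ by the $Y(q)$ estimates at level $q$. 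With that replacement (and the cosmetic restatement in (a) and (b)) your argument is complete and coincides with the paper's.
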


The case of strongly pseudoconvex CR manifolds of dimension $2n+1 \geq 5$ is covered under both Theorems~\ref{thm:PLB} and~\ref{thm:PLC}, with $Q = 2n+2$. For instance,

\begin{cor}
If $M^{2n+1}$ is a compact orientable strongly pseudoconvex CR manifold of dimension $2n+1 \geq 5$ and $q \ne 1$ nor $n-1$, then for any smooth $(0,q)$ form $u$ orthogonal to the kernel of $\boxb$, we have
$$\|u\|_{L^{\frac{Q}{Q-1}}(M)} \lesssim \|\dbarb u\|_{L^1(M)} + \|\dbarb^* u\|_{L^1(M)}$$
where $Q=2n+2$.
\end{cor}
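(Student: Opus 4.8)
The plan is to deduce the corollary directly from Theorem~\ref{thm:PLB} (or, alternatively, from Theorem~\ref{thm:PLC}) by verifying that a compact orientable strongly pseudoconvex CR manifold $M^{2n+1}$ with $2n+1 \geq 5$ meets all the hypotheses of one of those theorems and by identifying the resulting exponent. So the work is entirely a matter of checking the three structural assumptions and reading off $Q$.

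First I would record the three relevant facts about such an $M$. (1) Strong pseudoconvexity means the Levi form is definite, so at each point all $n$ of its eigenvalues have the same sign; in particular any one of them is bounded by a fixed constant times any other, so $M$ has comparable Levi eigenvalues, i.e.\ it satisfies condition $D(q_0)$ with $q_0 = 1$. (2) Since the Levi form is non-degenerate, at each point the brackets of length $2$ of the real and imaginary parts of the $(1,0)$ vector fields span the complementary (contact) direction, and hence, together with the $(1,0)$ and $(0,1)$ directions, they span the full tangent space; thus $M$ is of finite commutator type $m = 2$ at every point. (3) For a compact strongly pseudoconvex CR manifold of real dimension $2n+1 \geq 5$, Kohn's subelliptic estimates for $\boxb$ hold on $(0,q)$ forms for every $q$, and consequently $\dbarb$ has closed range in $L^2$ on $(0,q)$ forms for all $q$; it is exactly the hypothesis $n \geq 2$ that makes this available in every degree.

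With these in hand, Theorem~\ref{thm:PLB} applies with $q_0 = 1$ and $m = 2$, so $Q = 2n + m = 2n+2$. Part~(a) of that theorem has admissible degrees $q_0 \leq q \leq n - q_0$, i.e.\ $1 \leq q \leq n-1$, and together with the exclusion $q \neq 1$ nor $n-1$ this yields precisely $\|u\|_{L^{Q/(Q-1)}(M)} \lesssim \|\dbarb u\|_{L^1(M)} + \|\dbarb^* u\|_{L^1(M)}$ for every smooth $(0,q)$ form $u$ orthogonal to the kernel of $\boxb$, which is the claim. As an alternative route, one may use Theorem~\ref{thm:PLC}: since all $n$ eigenvalues of the Levi form share a sign and $2 \leq q \leq n-2$ forces $n \geq \max(q+1, n-q+1)$, condition $Y(q)$ holds for every such $q$, and Theorem~\ref{thm:PLC}(a) gives the same inequality with the same $Q = 2n+2$; this has the mild advantage of not requiring the closed-range theorem as a separate input, since $Y(q)$ already yields the needed estimates.

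There is no serious obstacle here beyond this bookkeeping. The only points that demand a little care are the verification of finite commutator type $2$ from non-degeneracy of the Levi form, and the citation of the correct closed-range / subelliptic result valid in \emph{all} degrees, which is exactly where the hypothesis $\dim M \geq 5$ enters — mirroring the role of the restriction $q \neq 1$ nor $n-1$, which is the same obstruction already present in the Euclidean inequalities of Bourgain--Brezis and Lanzani--Stein.
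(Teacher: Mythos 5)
Your proposal is correct and is essentially the paper's own (implicit) argument: the paper merely observes that a compact strongly pseudoconvex $M^{2n+1}$ with $2n+1\geq 5$ satisfies the hypotheses of both Theorem~\ref{thm:PLB} (with $q_0=1$, $m=2$, hence $Q=2n+2$, using embeddability/Kohn for the closed-range hypothesis) and Theorem~\ref{thm:PLC}, which is exactly the bookkeeping you carry out. The one small caveat is that part (a) of either theorem only covers $2\leq q\leq n-2$, so if the corollary is read as also permitting $q=0$ or $q=n$, those degrees should be obtained from parts (b) and (c) with $q_0=1$ (where the kernel of $\boxb$ coincides with that of $\dbarb$, resp.\ $\dbarb^*$) rather than from part (a) — a point the paper glosses over in the same way you do.
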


A few remarks are in order. First, part of the difficulty in Theorem~\ref{thm:PLA} is in proving the inequality with the best possible value of $Q$; it can be shown, using local dilation invariance, that the inequality in Theorem~\ref{thm:PLA} cannot hold for any value of $Q$ smaller than the one that is given there. Hence the value of $Q$ as defined in Theorem~\ref{thm:PLA} should be thought of as the correct \emph{non-isotropic dimension} that one should attach to the point 0 in such a situation.

Note also that with $Q$ as given in Theorem~\ref{thm:PLA}, there is a Sobolev inequality for functions $u$ that satisfies $u, \nabla_b u \in L^p$ if $1 \leq p < Q$ (see Proposition~\ref{prop:SE} below, which we state without proof). Theorem~\ref{thm:PLA} can be taken as a remedy of the failure of this embedding when $p = Q$.

\begin{prop}\label{prop:SE}
Let $X_1, \dots, X_n$ be smooth real vector fields on $\mathbb{R}^N$, whose commutators of length $\leq r$ span at $0$. Let $Q$ be the non-isotropic dimension at $0$ as defined in Theorem~\ref{thm:PLA}. Then there exists a neighborhood $U$ of $0$ and $C > 0$ such that if $u \in C^{\infty}_c(U)$ and $1 \leq p < Q$, then 
$$
\|u\|_{L^{p^*}(U)} \leq C \left(\|\nabla_b u\|_{L^p(U)} + \|u\|_{L^p(U)} \right) \quad \text{where} \quad \frac{1}{p^*} = \frac{1}{p} - \frac{1}{Q}.
$$
Moreover the inequality cannot hold for any bigger value of $p^*$.
\end{prop}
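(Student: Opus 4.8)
The plan is to obtain Proposition~\ref{prop:SE} from the metric geometry of the Carnot--Carath\'eodory (CC) distance $d$ attached to $X_1,\dots,X_n$, combined with a subelliptic Poincar\'e inequality and the standard mechanism that upgrades a Poincar\'e inequality to a Sobolev inequality on a doubling metric measure space. Only one geometric fact needs to be isolated carefully --- the uniform volume lower bound for CC balls centred near $0$ --- after which everything rests on results of Nagel--Stein--Wainger, Jerison (and Lu, Franchi--Lu--Wheeden), and Haj\l asz--Koskela.

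First I would record the two geometric inputs. By the ball--box theorem of Nagel--Stein--Wainger there is a neighbourhood $U_0$ of $0$ and $\rho_0 > 0$ so that, for $x \in U_0$ and $\rho < \rho_0$, the CC balls $B(x,\rho)$ are doubling, $|B(x,2\rho)| \le C |B(x,\rho)|$, and in addition satisfy
$$c\,\rho^{Q} \le |B(x,\rho)|, \qquad \text{with } |B(0,\rho)| \approx \rho^{Q} \text{ at the centre.}$$
The lower bound is the one place the hypothesis is used: in the NSW volume expansion $|B(x,\rho)| \approx \sum_I |\lambda_I(x)|\,\rho^{d(I)}$, the two-sided estimate at $0$ forces the coefficients $\lambda_I$ with $d(I) = Q$ not to vanish simultaneously at $0$, hence (by continuity) not near $0$; equivalently, the local homogeneous dimension $x \mapsto Q(x)$ is upper semicontinuous with $Q(0) = Q$, while CC balls can only be larger than the naive scaling at $x$ would predict. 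Second, the subelliptic $(1,p)$--Poincar\'e inequality (Jerison, Lu, Franchi--Lu--Wheeden) gives, for the same balls and all $u \in C^\infty(U_0)$,
$$\frac{1}{|B(x,\rho)|}\int_{B(x,\rho)} \bigl| u - u_{B(x,\rho)} \bigr| \;\le\; C\,\rho \left( \frac{1}{|B(x,\sigma\rho)|}\int_{B(x,\sigma\rho)} |\nabla_b u|^p \right)^{1/p},$$
where $u_B$ denotes the average of $u$ over $B$ and $\sigma \ge 1$ is a fixed dilation constant.

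I would then assemble these. A $(1,p)$--Poincar\'e inequality on a doubling space whose balls obey $|B(x,\rho)| \gtrsim \rho^{Q}$ for $\rho$ up to the diameter self-improves, by the Haj\l asz--Koskela theory (or the truncation method of Franchi--Lu--Wheeden), to the Sobolev--Poincar\'e estimate
$$\left( \frac{1}{|B|}\int_{B} |u - u_B|^{p^*} \right)^{1/p^*} \le C\,\mathrm{rad}(B) \left( \frac{1}{|\sigma B|}\int_{\sigma B} |\nabla_b u|^p \right)^{1/p}, \qquad \frac{1}{p^*} = \frac{1}{p} - \frac{1}{Q},$$
for all $1 \le p < Q$ and all such balls. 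Taking $U$ to be a small CC ball about $0$, fixing a ball $B \supset \overline{U}$ with $\sigma B \subset U_0$, and applying this to $u \in C^\infty_c(U)$: since $u$ and $\nabla_b u$ are supported in $U$, one has $|u_B| \le |B|^{-1}\|u\|_{L^1(U)} \lesssim \|u\|_{L^p(U)}$ by H\"older, and $\|u\|_{L^{p^*}(U)} \le \|u - u_B\|_{L^{p^*}(B)} + |u_B|\,|U|^{1/p^*}$, so the Sobolev--Poincar\'e estimate gives exactly $\|u\|_{L^{p^*}(U)} \le C(\|\nabla_b u\|_{L^p(U)} + \|u\|_{L^p(U)})$.

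For the sharpness claim, fix $\eta \in C^\infty_c([0,1))$ with $\eta \equiv 1$ on $[0,1/2]$ and, for small $\rho$, set $u_\rho(x) = \eta(d(0,x)/\rho)$; since $d(0,\cdot)$ is $1$--Lipschitz for $d$ we have $|\nabla_b u_\rho| \lesssim \rho^{-1}\mathbf{1}_{B(0,\rho)}$, and $u_\rho$ may be replaced by a genuinely smooth compactly supported approximant at the cost of an arbitrarily small error in all the norms below (alternatively, the proven inequality extends to CC-Lipschitz functions by density). Using $|B(0,\rho)| \approx \rho^{Q}$ one gets $\|\nabla_b u_\rho\|_{L^p} \lesssim \rho^{Q/p - 1}$, $\|u_\rho\|_{L^p} \approx \rho^{Q/p}$, and $\|u_\rho\|_{L^q} \gtrsim |B(0,\rho/2)|^{1/q} \approx \rho^{Q/q}$ for any $q$; if the inequality held with $p^*$ replaced by some $q > p^*$, then letting $\rho \to 0$ would force $\rho^{Q/q} \lesssim \rho^{Q/p - 1}$, i.e.\ $Q/q \ge Q/p - 1$, i.e.\ $q \le p^*$ --- a contradiction. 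The step I expect to be the real obstacle is the uniform volume lower bound $|B(x,\rho)| \gtrsim \rho^{Q}$ near $0$: once that (together with the subelliptic Poincar\'e inequality) is in hand, the remaining arguments are soft and by now classical.
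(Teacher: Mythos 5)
The paper states Proposition~\ref{prop:SE} explicitly \emph{without proof} (and points to Capogna--Danielli--Garofalo, Varopoulos and Gromov for related results), so there is no argument of the author's to compare yours against; I can only assess your proposal on its own terms.

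On those terms it is sound, and it is the natural route. You have correctly isolated the one point that actually carries the sharpness: the uniform volume lower bound $|B(x,\rho)|\gtrsim \rho^{Q}$ for $x$ near $0$, with $Q$ the non-isotropic dimension \emph{at $0$} rather than the doubling exponent or the free homogeneous dimension. Your justification --- in the Nagel--Stein--Wainger expansion $|B(x,\rho)|\approx\sum_I|\lambda_I(x)|\rho^{d(I)}$ some tuple $I_0$ with $d(I_0)=Q$ has $\lambda_{I_0}(0)\neq 0$, hence $|\lambda_{I_0}(x)|\geq c>0$ nearby, hence $|B(x,\rho)|\gtrsim \rho^Q$ --- is exactly the right argument, and it is what makes the paper's remark that its $Q$ is ``the best (i.e. smallest) possible'' come out. (A cleaner way to phrase the semicontinuity aside: $Q(x)=rN-\sum_{j=1}^{r-1}\dim V_j(x)$, and $\dim V_j$ is lower semicontinuous, so $Q(x)\leq Q(0)$.) Once that lower bound is in hand, the chain doubling $+$ subelliptic $(1,p)$-Poincar\'e $\Rightarrow$ $(p^*,p)$-Sobolev--Poincar\'e via Haj\l{}asz--Koskela (or Franchi--Lu--Wheeden truncation) is standard, and the passage from the Poincar\'e form to the $C^\infty_c$ form by absorbing $|u_B|$ through H\"older is routine. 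For the sharpness, your scaling computation gives the right conclusion; the only loose end is that $\eta(d(0,\cdot)/\rho)$ is merely CC-Lipschitz, and the cleanest fix is to take a genuinely smooth bump in the normal coordinates of Theorem~\ref{thm:PLA}, say $u_\rho(x)=\prod_{j,k}\eta(x_{jk}/\rho^{j})$, which is supported in a box comparable to $B(0,C\rho)$ and satisfies $|\nabla_b u_\rho|\lesssim\rho^{-1}$ directly, avoiding any density argument. With that cosmetic change the proof is complete.

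One remark on scope: your proof is metric in nature and uses nothing from the paper's Section~\ref{sect:div-curl}. In particular it does not need the linear-independence hypothesis on $X_1,\dots,X_n$ at $0$ that Theorem~\ref{thm:PLA} requires, consistent with the fact that the Proposition as stated omits that hypothesis. It also clarifies why the paper can afford to state the Proposition without proof: it sits entirely inside the subelliptic Poincar\'e--Sobolev machinery and is logically independent of the $L^1$ duality inequality that is the paper's actual contribution.
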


We remark that Capogna, Danielli, and Garofalo have obtained a similar Sobolev inequality in \cite{MR1266765}, but our proposition is sharper in general because we are always using the best (i.e. smallest) possible value of $Q$ in the definition of $p^*$. See also the work of Varopoulos \cite{MR1070036} and Gromov \cite[Section 2.3.D'']{MR1421823}.

Next in Theorem~\ref{thm:PLB}, the assumption that the ranges of $\dbarb$ on $(0,q)$ forms are closed in $L^2$ for all $q$ is met under fairly general conditions; by the results of Kohn \cite{MR850548} and Nicoara \cite{MR2189215}, this assumption is satisfied by all boundaries of bounded weakly pseudoconvex domains in $\mathbb{C}^{n+1}$, and more generally by all embeddable compact orientable CR manifolds of dimension $\geq 5$. The assumption of comparable sums of eigenvalues was made to ensure that maximal subellipticity holds in the $L^p$ sense (see Koenig \cite{MR1879002}). We made the assumption that $M$ has real dimension $2n+1 \geq 5$ because if the real dimension of $M$ were $2n+1=3$, then $n=1$, in which case $\dbarb$ produces only top forms and $\dbarb^*$ produces only functions. In these cases our method does not say anything about $(0,q)$ forms on $M$ for any $q$.

Finally, in effect Theorem \ref{thm:PLC} also only applies to CR manifolds of dimension $2n+1 \geq 5$, because condition $Y(q)$ is never satisfied in 3 dimensions for any $q$. Moreover, since conditions $Y(q)$ and $Y(n-q)$ are equivalent, one can formulate the corresponding inequalities for $(0,n-q)$ forms, $(0,n-q-1)$ forms and $(0,n-q+1)$ forms.

The main new ingredient in the proof of Theorem~\ref{thm:PLA} is that it involves a lifting of the given vector fields $X_1, \dots, X_n$ to a higher dimensional Euclidean space where they can be approximated by left-invariant vector fields of a homogeneous group. This approximation is crucial in making certain integration by parts argument work when we construct and estimate some convolution-like integrals. That such a lifting is possible was shown in the work of Rothschild and Stein \cite{MR0436223}. One can then adapt some previously known arguments, as in \cite{MR2078071}, \cite{MR2122730} and \cite{MR2511628}, to prove Theorem~\ref{thm:PLA}. The new challenge here is to still bound things in $L^Q$ with the correct value of $Q$ (as designated in Theorem~\ref{thm:PLA}) despite of the lifting (because lifting introduces a new space with a bigger non-isotropic dimension). This is done by carefully integrating out the added variables. Some lower order errors that arise from the approximation also need to be taken care of. The technical contents are contained in the proof of Lemma~\ref{lem:decomp} below.

It will be of interest to see in Theorem~\ref{thm:PLA} whether the assumption of linear independence of $X_1, \dots, X_n$ at $0$ can be replaced by some other weaker non-degeneracy conditions, although in our study of the $\dbarb$ complex this assumption is always satisfied. In fact a large part of our argument, namely Lemma~\ref{lem:decomp} below, goes through without having to assume this linear independence. It is only in the final argument of the proof of Theorem~\ref{thm:PLA} that we need that. 

\section{$L^1$ duality inequality for Hormander's vector fields}\label{sect:div-curl}

In this section we shall prove Theorem~\ref{thm:PLA}. The proof has its starting point the argument of van Schaftingen \cite{MR2078071}, Lanzani-Stein \cite{MR2122730} and Chanillo-van Schaftingen \cite{MR2511628} that freezes one of the variables in the integral to be estimated. We shall also need a variant of a decomposition lemma that appeared in their work (see Lemma~\ref{lem:decomp} below).

First observe that the Lebesgue measure $dx$ was used in the statement of Theorem~\ref{thm:PLA}, but the Lebesgue measure on $U$ depends on the choice of a coordinate system $x$. In proving Theorem~\ref{thm:PLA}, however, we are free to choose any coordinate system $x$ on $U$, because if the inequality holds in one coordinate system, then it holds in any other coordinate system. This is because the Lebesgue measure in one coordinate system is just the Lebesgue measure in another multiplied by a smooth function. Below we shall choose some `normal coordinate system' $x$ using the given vector fields, and  prove the inequality in that coordinate system. 

Next let $X_1, \dots, X_n$ be smooth real vector fields in $\mathbb{R}^N$, whose commutators of length $\leq r$ span the tangent space at $0$. We shall not require them to be linearly independent at $0$ except in the proof of Theorem~\ref{thm:PLA} below. Let $\{X_{jk}\}_{1 \leq j \leq r, 1 \leq k \leq n_j}$ be a collection of vector fields that satisfies the following:
\begin{enumerate}[(a)]
 \item Each $X_{jk}$ is a commutator of $X_1, \dots, X_n$ of length $j$;
 \item For each $1 \leq j_0 \leq r$, $\{X_{jk}\}_{1 \leq j \leq j_0, 1 \leq k \leq n_j}$ restricts at $0$ to a basis of $V_{j_0}(0)$.
\end{enumerate}
Without loss of generality we assume $X_{1k} = X_k$ for all $1 \leq k \leq n_1$. (Note that we must have $n_1 \geq 1$ for the commutators of $X_1, \dots, X_n$ to span at $0$.)
Given a point $\xi$ and a vector field $X$, we shall write $\exp(X)\xi$ for the time-1-flow along the integral curve of $X$ beginning at $\xi$. Then for each point $\xi$ near $0$, 
$$
x \mapsto \exp(x \cdot X') \xi, \qquad x \cdot X' := \sum_{j = 1}^r \sum_{k=1}^{n_j} x_{jk} X_{jk}
$$ 
defines a normal coordinate system locally near $\xi$,
where $x = (x_{jk})_{1 \leq j \leq r, 1 \leq k \leq n_j}$. Throughout we shall take $U$ to be a (sufficiently small) totally normal neighborhood of $0$, which means that it is a normal neighborhood of each of its points. Since we have already restricted ourselves to consider only functions that have compact support in $U$, we shall use consistently identify $U$ as a subset of the tangent space $T_0(\mathbb{R}^N)$ of $\mathbb{R}^N$ at $0$ using the exponential map. In particular, we shall consistently write $x$ for $\exp(x \cdot X')0$. Hence $x$ shall denote the normal coordinates of $U$ at $0$. Any compactly supported function on $U$ will automatically be extended to $T_0(\mathbb{R}^N)$ by 0 outside $U$. We shall often just write $\mathbb{R}^N$ for $T_0(\mathbb{R}^N)$.

The following decomposition lemma is a generalization of the key lemma in Chanillo and van Schaftingen \cite{MR2511628}.

\begin{lemma}\label{lem:decomp}
Let $U$ be a sufficiently small totally normal neighborhood of $0$ and $I$ be the set of all $a \in \mathbb{R}$ for which $\{x_{11}=a\}\cap U \ne \emptyset$. Then for any $\Phi \in C^{\infty}_c(U)$, any $a \in I$ and any $\lambda > 0$, there is a decomposition of the restriction of $\Phi$ to the hyperplane $\{x_{11}=a\}\cap U$ into $$\left.\Phi\right|_{\{x_{11}=a\}\cap U} = \Phi_1^a + \Phi_2^a$$ and an extension of $\Phi_2^a$ to the whole $U$ (which we still denote by $\Phi_2^a$) such that $\Phi_2^a \in C^{\infty}(U)$ and 
\begin{align*}
\|\Phi_1^a\|_{L^{\infty}(\{x_{11}=a\} \cap U)} &\leq C \lambda^{\frac{1}{Q}} M\mathcal{I}(a)\\
\|\nabla_b \Phi_2^a\|_{L^{\infty}(U)} &\leq C \lambda^{\frac{1}{Q}-1} M\mathcal{I}(a)\\
\|\Phi_2^a\|_{L^{\infty}(U)} &\leq C \lambda^{\frac{1}{Q}-1} M\mathcal{J}(a)
\end{align*}
where 
$$\begin{cases}
\mathcal{I}(x_{11}) = \left(\int_{\overline{x} \in \mathbb{R}^{N-1}} (|\nabla_b \Phi|^Q + |\Phi|^Q) (x) d\overline{x}\right)^{\frac{1}{Q}} \\
\mathcal{J}(x_{11}) = \left(\int_{\overline{x} \in \mathbb{R}^{N-1}} |\Phi|^Q (x) d\overline{x}\right)^{\frac{1}{Q}} 
\end{cases},
\quad x = (x_{11},\overline{x}),$$
$d\overline{x} = dx_{12} \dots dx_{rn_r}$ and $M$ is the standard Hardy-Littlewood maximal function on $\mathbb{R}$.
\end{lemma}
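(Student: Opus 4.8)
The plan is to follow the route indicated in the introduction: lift the vector fields to make them model left-invariant ones, build the decomposition upstairs, and then integrate out the added variables. First I would invoke the Rothschild--Stein lifting theorem \cite{MR0436223}: adjoining new variables $t$, one obtains vector fields $\tilde X_1,\dots,\tilde X_n$ on a neighborhood of $0$ in some $\mathbb{R}^{\tilde N}$ ($\tilde N\ge N$) that are free up to step $r$ and that agree, near every point, to first order with a fixed family $Y_1,\dots,Y_n$ of left-invariant degree-one vector fields on the free nilpotent group $G=G_{n,r}$ of homogeneous dimension $\tilde Q\ge Q$. The $t$-variables can be chosen to form a dilation-compatible block complementary to the base, with $x_{11}$ still a weight-one coordinate upstairs, and one lifts $\Phi$ by tensoring with a fixed bump in $t$. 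All of the construction and the estimates are then carried out on $G$, where the left-invariance of the $Y_j$ makes the integration-by-parts manipulations inside the convolution-type integrals below transparent, and the bounds are transferred back to $U$ by integrating in $t$.

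On $G$ I would fix $a$ and $\lambda$ and work at the non-isotropic scale $\rho=\lambda$, the scale calibrated so that the exponents below come out with $Q$. Take a Whitney-type covering of a fixed neighborhood of $0$ by non-isotropic balls $T_i$ of radius $\sim\rho$ with bounded overlap, each meeting the slab $\{|x_{11}-a|\lesssim\rho\}$, and a subordinate partition of unity $\{\eta_i\}$ with $\|\nabla_b\eta_i\|_{L^\infty}\lesssim\rho^{-1}$; since the $\eta_i$ are adapted to the ambient degree-one metric this bound holds for all of $X_1,\dots,X_n$, in particular for the transverse field $X_1=X_{11}$, which is what makes the estimate on $X_1\Phi_2^a$ go through. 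Let $c_i(x_{11})$ be the average of the lifted $\Phi$ over the ball $T_i$ recentred to height $x_{11}$, set $\Phi_2^a:=\sum_i c_i(x_{11})\eta_i$ (then integrate out $t$), and $\Phi_1^a:=\Phi|_{\{x_{11}=a\}}-\Phi_2^a|_{\{x_{11}=a\}}$. The three estimates then reduce to combining two ingredients: (1) a subelliptic Sobolev/Morrey inequality on the hyperplane $\{x_{11}=a\}$, together with the companion $L^Q$-Poincar\'e inequality --- available because relinquishing the weight-one coordinate $x_{11}$ lowers the effective homogeneous dimension of the slice to $Q-1<Q$, so that $Q$ becomes supercritical and yields $L^\infty$ control from $L^Q$ control (this supercriticality on the slice is what the whole mechanism turns on); and (2) the one-dimensional Hardy--Littlewood maximal function in $x_{11}$, which appears because the slabs $\{|x_{11}-a|\lesssim\rho\}$ are being averaged over. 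Feeding these into the construction gives $\|\Phi_2^a\|_{L^\infty}\lesssim\rho^{1/Q-1}M\mathcal J(a)$ directly from H\"older and the maximal function; $\|\nabla_b\Phi_2^a\|_{L^\infty}\lesssim\rho^{1/Q-1}M\mathcal I(a)$ from a direct estimate on $c_i'$ together with $\sum_i\nabla_b\eta_i=0$ and the Poincar\'e inequality (used to bound $|c_i-c_{i'}|\lesssim\rho^{1/Q}M\mathcal I(a)$ for neighbouring indices); and $\|\Phi_1^a\|_{L^\infty}\lesssim\rho^{1/Q}M\mathcal I(a)$ from the Sobolev/Morrey inequality, the maximal function entering when the slice average of $\Phi$ over $B_i$ is compared with the slab average $c_i(a)$. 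With $\rho=\lambda$ these are exactly the asserted bounds.

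The substance lies in the two steps glossed over. First, the whole construction lives in the lifted space, of homogeneous dimension $\tilde Q>Q$, so one must integrate out the $\tilde N-N$ added variables and verify that all the volume factors recombine so that the surviving powers of $\lambda$ carry $Q$, not $\tilde Q$. This works precisely because the $t$-block is dilation-compatible: the balls $T_i$ factor, up to constants, as a base ball times a fixed-shape fibre ball, so the fibre integration only contributes harmless constants. This bookkeeping --- keeping $Q$ sharp after the lifting --- is the crux, and is exactly the sharpness phenomenon flagged after Theorem~\ref{thm:PLA}. Second, replacing $\tilde X_j$ by the model $Y_j$ generates error terms in each of the identities and inequalities above; by the Rothschild--Stein approximation each such error gains a power of $\rho$ (or a degree of regularity), so after shrinking $U$ it is absorbed into the main terms. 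I expect the first of these --- the careful integrating-out of the added variables with the sharp value of $Q$ --- to be the main obstacle; note that, consistently with the remark in the introduction, the argument nowhere uses linear independence of $X_1,\dots,X_n$ at $0$.
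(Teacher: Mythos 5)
There is a genuine gap, in fact two. First, your construction of $\Phi_2^a$ lives at the single scale $\rho=\lambda$: Whitney balls of radius $\sim\lambda$ meeting the slab $\{|x_{11}-a|\lesssim\lambda\}$, with averages ``recentred to height $x_{11}$''. But the lemma requires $\|\Phi_2^a\|_{L^\infty(U)}$ and $\|\nabla_b\Phi_2^a\|_{L^\infty(U)}$ on \emph{all} of $U$, with the maximal functions evaluated at $a$ (and this global bound is exactly what the application needs, since in the proof of Theorem~\ref{thm:PLA} one integrates $\frac{d}{ds}(F_1\Phi_2^a)(\exp(sX_1)x)$ over the entire $X_1$-flow). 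At a point whose $x_{11}$-coordinate is at distance $d\gg\lambda$ from $a$, an average over a $\lambda$-ball at that height is controlled by $M\mathcal J(x_{11})$, not $M\mathcal J(a)$; if instead you cut $\Phi_2^a$ off near the slab, the cutoff derivative costs an extra $\lambda^{-1}$ and destroys the $\nabla_b$ estimate. (Your covering is also internally inconsistent: balls of radius $\sim\rho$ all meeting the slab cannot cover a fixed neighborhood of $0$.) The paper's construction avoids this precisely by smoothing at the \emph{variable} scale $\sqrt{\lambda^2+s^2}$, centered at the projection $\xi(a,\chi)$ of the point onto the hyperplane along the $X_1$-flow (see (\ref{eq:Phi2adef})); this ``cone'' geometry is what makes the interval of slices entering each average contain $a$ and have length comparable to its distance from $a$, so that $M\mathcal I(a)$, $M\mathcal J(a)$ appear uniformly on $U$, while the transverse derivative only produces the harmless factor $s/\sqrt{\lambda^2+s^2}\le 1$.

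Second, the mechanism you propose for the $\Phi_1^a$ bound --- a ``subelliptic Sobolev/Morrey inequality on the hyperplane $\{x_{11}=a\}$ of effective dimension $Q-1$'' --- does not exist in this setting: $X_2,\dots,X_n$ are not tangent to the hyperplane (this transversality is the central difficulty the paper emphasizes), so the restricted data $(|\nabla_b\Phi|+|\Phi|)|_{\{x_{11}=a\}}$ controls essentially only some directions within the slice. On the Heisenberg group one can already arrange $X\Phi=Y\Phi=0$ on a slice $\{x=a\}$ while $\Phi|_{\{x=a\}}$ is a spike of large sup norm and small $L^Q$ slice norm, so no such embedding holds. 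In the paper the $\Phi_1^a$ estimate comes instead from telescoping in the smoothing scale, $\Phi_1^a=-\int_0^\lambda\frac{d}{d\mu}(\text{average at scale }\mu)\,d\mu$, combined with the group identity $\frac{d}{d\mu}I_\mu\eta_0=\sum_kY_kI_\mu\eta_k$ and integration by parts to move $Y_k\approx\tilde X_k$ onto $\Phi$, after which Lemma~\ref{lem:maximal} (the careful integrating-out of the lifted variables, which you do flag correctly) produces the maximal functions at $a$. This is exactly where the Rothschild--Stein approximation errors $R_{k,\tilde\xi}$ of local degree $\le 0$ and, for $X_k\Phi_2^a$ with $k\ge 2$, the Campbell--Hausdorff Lemmas~\ref{lem:CH1} and~\ref{lem:CH2} must be used (the projection $\chi\mapsto(\xi,s)$ is adapted to $X_1$ but not to $X_k$); your sketch replaces all of this by an unavailable slice embedding and a one-line remark that the approximation errors gain a power of $\rho$, so the heart of the proof is missing.
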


Assuming the lemma for the moment, we shall now adopt the argument of van Schaftingen \cite{MR2078071} to finish the proof of Theorem~\ref{thm:PLA}. The main difficulty is that now when one freeze say the $x_{11}$ the coefficient, the vector fields $X_2, \dots, X_n$ are no longer tangent to the hyperplanes where $x_{11}$ is constant. This would kill the whole integration by parts argument by introducing extra boundary integrals that one cannot control. Fortunately, when the vector fields $X_1, \dots, X_n$ are linearly independent at $0$ and $U$ is sufficiently small, the transverse components of $X_2, \dots, X_n$ to the hyperplanes $\{x_{11} = \text{constants}\}$ are small near $0$, and a perturbation argument would then work. 

\begin{proof}[Proof of Theorem~\ref{thm:PLA}]
Let $U$ be a small neighborhood of $0$ on which Lemma~\ref{lem:decomp} holds. Shrinking $U$ if necessary, we may assume that for $1 \leq k \leq n$, $X_k$ is transverse to all hyperplanes $\{x_{1k}=a\}$ that intersect $U$. For $l \ne k$, decompose $X_l$ into $$X_l = X_{l}^k + a_{kl}(x) X_k$$ where $X_{l}^k$ are parallel to all the hyperplanes $\{x_{1k}=a\}$ that intersect $U$ and $a_{kl}(x)$ are smooth functions of $x$ with $a_{kl}(0)=0$. By further shrinking $U$ if necessary we may assume all $\|a_{kl}\|_{L^{\infty}(U)}$ are sufficiently small.

Suppose $X_1f_1 + \dots + X_n f_n = g$ in $U$, where $f_1,\dots,f_n,g$ are all in $C^{\infty}_c(U)$. 
Then
\begin{equation}\label{eq:divcurlX1}
X_1 F_1 + X_2^1 f_2 + \dots + X_n^1 f_n = X_1(a_{12}) f_2 + \dots + X_1(a_{1n}) f_n + g
\end{equation}
where $$F_1 = f_1 + a_{12}f_2 + \dots + a_{1n}f_n.$$ We shall show that 
$$\left|\int_{U} F_1(x) \Phi(x) dx\right| \leq C \left(\|f\|_{L^1(U)} \|\Phi\|_{NL_1^Q(U)} + \|g\|_{L^1(U)} \|\Phi\|_{L^Q(U)}\right)$$
for $\Phi \in C^{\infty}_c(U)$. 
Assuming this for the moment, by symmetry we may conclude the same estimate with $F_1$ replaced by $F_k$
for all $1 \leq k \leq n$, where $F_k = f_k + \sum_{l \ne k} a_{kl} f_l.$
Since $\|a_{kl}\|_{L^{\infty}(U)}$ are all sufficiently small, we may write $f_1$ as a linear combination of $F_1, \dots, F_n$ with $C^{\infty}$ coefficients, say $$f_1(x) = \sum_{k=1}^n b_k(x) F_k(x)$$ with $b_k \in C^{\infty}(U)$ and conclude, as desired, that
\begin{align*}
\left|\int_{U} f_1(x) \Phi(x) dx\right| 
\leq& \sum_{k=1}^n \left|\int_{U} F_k(x) (b_k(x) \Phi(x)) dx\right| \\
\leq& \sum_{k=1}^n C \left(\|f\|_{L^1(U)} \|b_k\Phi\|_{NL_1^Q(U)} + \|g\|_{L^1(U)} \|b_k\Phi\|_{L^Q(U)}\right) \\
\leq& C \left(\|f\|_{L^1(U)} \|\Phi\|_{NL_1^Q(U)} + \|g\|_{L^1(U)} \|\Phi\|_{L^Q(U)}\right).
\end{align*}

We are left to estimate $\int_{U} F_1(x) \Phi(x) dx$ for $\Phi \in C^{\infty}_c(U)$. The argument follows closely that in \cite{MR2078071}. If $\{x_{11} = a\}$ intersects $U$ and $\lambda > 0$, we decompose $\Phi$ into $\Phi_1^a + \Phi_2^a$ as in Lemma~\ref{lem:decomp} and get
$$
\int_{\{x_{11}=a\}} F_1(x) \Phi(x) d\overline{x}
=\int_{\{x_{11}=a\}} F_1(x) \Phi_1^a(x) d\overline{x} + \int_{\{x_{11}=a\}} F_1(x) \Phi_2^a(x) d\overline{x} = I + II.
$$
The first term is bounded by 
$$
|I| \leq C \lambda^{\frac{1}{Q}} \|F_1\|_{L^1(d\overline{x})}(a) M\mathcal{I}(a) \leq C \lambda^{\frac{1}{Q}} \|f\|_{L^1(d\overline{x})}(a) M\mathcal{I}(a).
$$ 
To bound the second term, we apply the fundamental theorem of calculus along integral curves of $X_1$:
\begin{align*}
II
=& -\int_0^{\infty} \int_{\{x_{11}=a\}} \frac{d}{ds} (F_1 \Phi_2^a)(\exp(sX_1)x) d\overline{x} ds\\
=& -\int_0^{\infty} \int_{\{x_{11}=a\}} ((X_1F_1)  \Phi_2^a + F_1 (X_1\Phi_2^a))(\exp(sX_1)x) d\overline{x} ds.
\end{align*}
Using (\ref{eq:divcurlX1}), the integral of the term containing $X_1F_1$ can be written as
\begin{align*}
\int_0^{\infty} \!\! \int_{\{x_{11}=a\}} \left(\sum_{k=2}^n \left(X_k^1 (f_k \Phi_2^a) - f_k (X_k^1 \Phi_2^a) - X_k(a_{1k})f_k \Phi_2^a\right) - g \Phi_2^a \right)(\exp(sX_1)x) d\overline{x} ds
\end{align*}
The integral involving $X_k^1 (f_k \Phi_2^a)$ is bounded by $C\|f_k\|_{L^1(U)} \|\Phi_2^a\|_{L^{\infty}(U)}$, because $X_k^1 $ are parallel to all the hyperplanes $\{x_{11} = a\}$ that intersect $U$, and we can integrate by parts and bound what we obtain by changing variable $(s,\overline{x}) \mapsto \exp(sX_1)x$. Hence we can bound $II$ by
\begin{align*}
|II|
&\leq C \|f\|_{L^1(U)} \left( \|\nabla_b \Phi_2^a\|_{L^{\infty}(U)} +  \|\Phi_2^a\|_{L^{\infty}(U)} \right) + \|g\|_{L^1(U)} \|\Phi_2^a\|_{L^{\infty}(U)}\\
&\leq C \lambda^{\frac{1}{Q}-1} (\|f\|_{L^1(U)} M\mathcal{I}(a) + \|g\|_{L^1(U)} M\mathcal{J}(a)).
\end{align*}
Combining the estimates for $I$ and $II$, and optimizing $\lambda$, we get
\begin{align*}
&\left|\int_{\{x_{11}=a\}} F_1(x) \Phi(x) d\overline{x}\right|  \\
\leq & C \|f\|_{L^1(d\overline{x})}(a)^{1-\frac{1}{Q}}  M\mathcal{I}(a)^{1-\frac{1}{Q}} \left(\|f\|_{L^1(U)} M\mathcal{I}(a) + \|g\|_{L^1(U)} M\mathcal{J}(a))\right)^{\frac{1}{Q}}. 
\end{align*}
Integrating in $a$, and using Holder's inequality, we get
\begin{align*}
&\left|\int_{U} F_1(x) \Phi(x) dx\right| \\
\leq & C \|f\|_{L^1(U)}^{1-\frac{1}{Q}} \|M\mathcal{I}\|_{L^Q(dx_{11})}^{1-\frac{1}{Q}} \left(\|f\|_{L^1(U)}^{\frac{1}{Q}} \|M\mathcal{I}\|_{L^Q(dx_{11})}^{\frac{1}{Q}} + \|g\|_{L^1(U)}^{\frac{1}{Q}}\|M\mathcal{J}\|_{L^Q(dx_{11})}^{\frac{1}{Q}}\right)   \\
\leq & C \left(\|f\|_{L^1(U)} \|\Phi\|_{NL_1^Q(U)} + \|g\|_{L^1(U)} \|\Phi\|_{L^Q(U)}\right)
\end{align*}
by the boundedness of the maximal function on $L^Q(\mathbb{R})$ as desired.
\end{proof}

We now turn to the proof of Lemma~\ref{lem:decomp}. The main idea is to try to approximate the given vector fields $X_1, \dots, X_n$ on $\mathbb{R}^N$ by the left-invariant vector fields of a homogeneous group at each point. The approximation is desirable because we shall perform some convolution-like construction, and some integration by parts only work correctly if the vector fields involved are modelled on left-invariant vector fields of some group. While the approximation can be done directly in certain simple situations, in general we need to lift the vector fields $X_1, \dots, X_n$ to some vector fields $\tilde{X}_1, \dots, \tilde{X}_n$ on a higher dimensional Euclidean space $\mathbb{R}^{\tilde{N}}$, and only approximate the lifted vector fields by left-invariant vector fields. Note, however, that we cannot expect to obtain Lemma~\ref{lem:decomp} from the case of Lemma~\ref{lem:decomp} for the lifted vector fields, because the non-isotropic dimensions corresponding to the original and the lifted vector fields are different.

To begin with, let $U_1$ be a totally normal neighborhood of $0$. By Theorems 4 and 5 of Rothschild-Stein~\cite{MR0436223}, shrinking $U_1$ if necessary, there exists a neighborhood $\tilde{U_1}$ of $0$ in a higher dimensional Euclidean space $\mathbb{R}^{\tilde{N}}$, a smooth submersion $\pi \colon \tilde{U_1} \to U_1$, and smooth vector fields $\tilde{X}_1,\dots,\tilde{X}_n$ on $\tilde{U_1}$ such that
\begin{enumerate}[(a)]
\item $d\pi_{\tilde{\xi}}(\tilde{X}_k) = X_k$ for all $\tilde{\xi} \in \tilde{U_1}$ and $1 \leq k \leq n$; and
\item there exists a homogeneous group $G$ diffeomorphic to $\mathbb{R}^{\tilde{N}}$ such that
\begin{enumerate}[(i)]
\item the Lie algebra of $G$ is generated by $n$ left-invariant vector fields $Y_1,\dots,Y_n$ of degree 1, and
\item each $Y_k$ is a good approximation of $\tilde{X}_k$ at every point of $\tilde{U_1}$ in the sense we shall describe below  (see (\ref{requirement})), for $1 \leq k \leq n$.
\end{enumerate}
\end{enumerate}

In fact we shall also choose $G$ so that the grading of $\mathbb{R}^{\tilde{N}}$ at $0$ given by $\tilde{X}_1,\dots,\tilde{X}_n$ can be identified with that of the Lie algebra of $G$, in the sense that (\ref{eq:gradreq}) below holds.

Before we describe the approximation, we need to set up some notations. For each ordered tuple $\gamma = (\gamma_1, \dots, \gamma_j)$ with each $\gamma_i \in \{1, \dots, n\}$, we write $$X_{\gamma} = [X_{\gamma_1},[X_{\gamma_2},\dots,[X_{\gamma_{j-1}},X_{\gamma_j}]]].$$ Similarly for $\tilde{X}_{\gamma}$ and $Y_{\gamma}$. Remember we have defined $X_{jk}$ for $1 \leq j \leq r$, $1 \leq k \leq n_j$. For such $j$ and $k$, define now $\tilde{X}_{jk} = \tilde{X}_{\gamma}$ if $\gamma$ is some ordered tuple for which $X_{jk} = X_{\gamma}$. Any such choice of $\gamma$ will do here, and this choice will be fixed from now on. Note that $d\pi_{\tilde{\xi}}(\tilde{X}_{\gamma}) = X_{\gamma}$ for all $\tilde{\xi} \in \tilde{U_1}$, and in particular \begin{equation} \label{eq:dpi} d\pi_{\tilde{\xi}}(\tilde{X}_{jk}) = X_{jk}\end{equation} for all $\tilde{\xi} \in \tilde{U_1}$.

Now $\{\tilde{X}_{jk}\}_{1 \leq j \leq r, 1 \leq k \leq n_j}$ are linearly independent at $0$ by (\ref{eq:dpi}). We can extend this collection of vector field by choosing vectors $\tilde{X}_{jk}$, $1 \leq j \leq r$, $n_j < k \leq \tilde{n_j}$, such that each new $\tilde{X}_{jk}$ is still a commutator of $\tilde{X}_1, \dots, \tilde{X}_n$ of length $j$, and such that the extended collection of vector fields has the property that for any $1 \leq j_0 \leq r$, the restriction of $\{\tilde{X}_{jk}\}_{1 \leq j \leq j_0, 1 \leq k \leq \tilde{n_j}}$ to $0$ form a basis of the tangent subspace at $0$ spanned by the commutators of $\tilde{X}_1, \dots, \tilde{X}_n$ of length $\leq j_0$ (call this tangent subspace $\tilde{V}_{j_0}(0)$). This can be accomplished by choosing $\tilde{X}_{jk}$ inductively: first choose $\tilde{X}_{1k}$, $n_1 < k \leq \tilde{n_1}$, among the $\tilde{X}_k$'s such that $\{\tilde{X}_{1k} \colon 1 \leq k \leq \tilde{n_1}\}$ form a basis of $\tilde{V}_1(0)$. Then $\{\tilde{X}_{1k} \colon 1 \leq k \leq \tilde{n_1}\} \cup \{\tilde{X}_{2k} \colon 1 \leq k \leq n_2\}$ is a linearly independent set of vectors when restricted to $0$, because if $$\sum_{k=1}^{\tilde{n_1}} a_{1k} \tilde{X}_{1k}(0) + \sum_{k=1}^{n_2} a_{2k} \tilde{X}_{2k}(0) = 0,$$ then taking $d\pi_0$ of both sides, we get $\sum_{k=1}^{n_2} a_{2k} X_{2k}(0) \in V_1(0),$ so all $a_{2k} = 0$ by our choice of the original $X_{jk}$'s, and by linear independence of $\tilde{X}_{1k}$'s we get all $a_{1k} = 0$. Hence we can extend this collection to a basis of $\tilde{V}_2(0)$ by choosing additional $\tilde{X}_{2k}$, $n_2 < k \leq \tilde{n_2}$, that are commutators of length 2. Similarly we can choose additional $\tilde{X}_{jk}$, $n_j < k \leq \tilde{n_j}$ to satisfy the forementioned conditions. 

Since $d\pi_{\tilde{\xi}}(\tilde{X}_{jk})$ for $1 \leq j \leq r$, $n_j < k \leq \tilde{n_j}$ depends only on $\pi(\tilde{\xi})$ and not on the particular choice of $\tilde{\xi}$, we may define $X_{jk}$ on $U_1$ for such $j,k$ by (\ref{eq:dpi}) as well. By shrinking $\tilde{U_1}$, we may assume that the extended $\{\tilde{X}_{jk}\}_{1 \leq j \leq r, 1 \leq k \leq \tilde{n_j}}$ form a basis of the tangent space $T_{\tilde{\xi}}\mathbb{R}^{\tilde{N}}$ of the lifted space $\mathbb{R}^{\tilde{N}}$ at each $\tilde{\xi} \in \tilde{U_1}$, so that for each point $\tilde{\xi} \in \tilde{U_1}$, $$y \mapsto \exp(y \cdot \tilde{X}) \tilde{\xi}, \qquad y \cdot \tilde{X} := \sum_{j=1}^{r} \sum_{k=1}^{\tilde{n_j}} y_{jk} \tilde{X}_{jk}$$ defines a normal coordinate system near $\tilde{\xi}$, where $y = (y_{jk})_{1 \leq j \leq r, 1 \leq k \leq \tilde{n_j}}.$ Shrinking $\tilde{U_1}$ (hence $U_1$) if necessary, we may assume that $\tilde{U_1}$ is a totally normal neighborhood of $0$ as well.

For $1 \leq j \leq r$, $1 \leq k \leq \tilde{n_j}$, we define now $Y_{jk} = Y_{\gamma}$ if $\gamma$ is an ordered tuple for which $\tilde{X}_{jk} = \tilde{X}_{\gamma}$. Then the first claim is that we can choose $G$ such that 
\begin{align}\label{eq:gradreq}
&\text{for any $1 \leq j_0 \leq r$, $\{Y_{jk}\}_{1 \leq j \leq j_0, 1 \leq k \leq \tilde{n_j}}$ is a basis of those } \\ 
&\text{left-invariant vector fields on $G$ whose degrees are $\leq j_0$.} \notag
\end{align} 
(Note that the extended $\{\tilde{X}_{jk}\}_{1 \leq j \leq j_0, 1 \leq k \leq \tilde{n_j}}$ also satisfy an analogous condition at $0$ by our previous analysis.) 
Hence the dimension of the space of left-invariant vector fields on $G$ whose degrees are $\leq j$ is equal to $\tilde{n_1} + \dots + \tilde{n_j}$, and the homogeneous dimension of $G$ is $\tilde{Q} = \sum_{j=1}^r j\tilde{n_j}.$ Now $$y \mapsto \exp(y \cdot Y), \qquad y \cdot Y :=  \sum_{j=1}^{r} \sum_{k=1}^{\tilde{n_j}} y_{jk} Y_{jk}$$ defines a normal coordinate system on $G$, where $y = (y_{jk})_{1 \leq j \leq r, 1 \leq k \leq \tilde{n_j}}.$ On $G$ this is the only coordinate system we shall use, so we shall consistently identify $y$ with $\exp(y \cdot Y) \in G$.

Recall on $G$ we have non-isotropic dilations $$\delta \cdot y = (\delta^j y_{jk}).$$ If $\alpha = (j_1k_1, \dots, j_sk_s)$ is a multiindex, $y^{\alpha} := y_{j_1k_1}y_{j_2k_2} \dots y_{j_sk_s}$ is said to have non-isotropic degree $|\alpha|=j_1+\dots+j_s$. A function $f$ of $y$ is said to vanish to non-isotropic order $l$ at $0$ if its Taylor series expansion consists of terms whose non-isotropic degrees are all $\geq l$. A vector field $\sum_{j=1}^r \sum_{k=1}^{\tilde{n_j}} f_{jk}(y) \frac{\partial}{\partial y_{jk}}$ on $G$ is said to have local degree $\leq l$ at $y=0$ if $f_{jk}(y)$ vanish to non-isotropic orders $\geq j-l$ at $0$ for all $j$, $k$.

We can now describe the desired approximation of the lifted vector fields $\tilde{X}_k$ by $Y_k$ at every point of $\tilde{U}_1$.
Given any $\tilde{\xi} \in \tilde{U_1}$, 
$$\exp(y \cdot \tilde{X})\tilde{\xi} \mapsto y$$
defines a diffeomorphism of $\tilde{U_1}$ with a neighborhood of 0 on $G$. Any vector field $Y$ on $G$ can then be pulled back to a vector field $Y^{\tilde{\xi}}$ on $\tilde{U_1}$ using this diffeomorphism.
If we define $R_{k,\tilde{\xi}}$ to be a vector field on $G$ whose pullback $R_{k,\tilde{\xi}}^{\tilde{\xi}}$ on $\tilde{U_1}$ is given by 
\begin{equation}\label{eq:R_k}
R_{k,\tilde{\xi}}^{\tilde{\xi}} = \tilde{X}_k - Y_k^{\tilde{\xi}},
\end{equation}
then the required approximation of $\tilde{X}_k$ by $Y_k$ is the requirement that 
\begin{equation} \label{requirement}
\text{$R_{k,\tilde{\xi}}$ has local degree $\leq 0$ at $0$ for all $1 \leq k \leq n$ and all $\tilde{\xi} \in \tilde{U_1}$.}  
\end{equation}
This (and (\ref{eq:gradreq})) can be achieved if the lifted vector fields $\tilde{X}_1,\dots,\tilde{X}_n$ were free up to step $r$ and $G$ were the homogeneous group whose Lie algebra is generated by $n$ elements and free up to step $r$, but we shall not need this freeness in our argument.

If for each ordered tuple $\gamma$ and $\tilde{\xi} \in \tilde{U_1}$, we define vector fields $R_{\gamma,\tilde{\xi}}$ on $G$ by 
\begin{equation}\label{eq:R_k2}
R_{\gamma,\tilde{\xi}}^{\tilde{\xi}} = \tilde{X}_{\gamma} - Y_{\gamma}^{\tilde{\xi}}
\end{equation}
then by induction on $|\gamma|$ we can show that $R_{\gamma,\tilde{\xi}}$ has local degree $\leq |\gamma|-1$ at $0$.

Going back to $U_1 \subset \mathbb{R}^N$, recall that we defined $x \cdot X' = \sum x_{jk} X_{jk}$ for $x \in \mathbb{R}^N$ using only the vector fields $\{X_{jk}\}_{1 \leq j \leq r, 1 \leq k \leq n_j}$ that are linearly independent at $0$. We now define, for $y = (y_{jk})_{1 \leq j \leq r, 1 \leq k \leq \tilde{n_j}} \in \mathbb{R}^{\tilde{N}}$, $$y \cdot X = \sum_{j=1}^{r} \sum_{k=1}^{\tilde{n_j}} y_{jk} X_{jk}$$ using all the commutators $\{X_{jk}\}_{1 \leq j \leq r, 1 \leq k \leq \tilde{n_j}}$.

The following lemma are easy consequences of the Campbell-Hausdorff formula (see Rothschild-Stein~\cite{MR0436223}).

\begin{lemma}\label{lem:CH1}
If $S(\delta)$ is a smooth function of $\delta$ with $S(0)=s$, then $$\delta \mapsto \exp(-S(\delta)X_1) \exp(\delta X_2) \exp(sX_1) \xi$$ is a smooth curve passing through $\xi$ when $\delta = 0$, and its tangent vector at $\delta = 0$ is $$- \frac{dS}{d\delta}(0) X_1 + \sum_{j=1}^r \sum_{|\gamma|=j} s^{j-1} c_{\gamma} X_{\gamma} + \sum_{j=1}^r \sum_{|\gamma|=j} e_{\gamma,\xi}(s)X_{\gamma}$$ evaluated at $\xi$, where $c_{\gamma}$ are constants and $e_{\gamma,\xi}(s)$ are smooth functions of $s$ that vanish to order $\geq r$ at $s=0$.
\end{lemma}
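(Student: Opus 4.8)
The plan is to reduce the statement to two standard facts about flows: the chain rule for differentiating a flow in a time parameter, and the Lie-series formula for conjugating a vector field by a flow (which is the Campbell-Hausdorff content here). One could instead combine the three exponentials into a single $\exp(Z(\delta))\xi$ by iterated Campbell-Hausdorff, but keeping the three factors separate is cleaner.

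First I would set $F(\delta,\epsilon) = \exp(-S(\delta)X_1)\exp(\epsilon X_2)\exp(sX_1)\xi$, which is jointly smooth near $(0,0)$ since it is built from flows of smooth vector fields; because $S(0)=s$ we have $F(0,0)=\xi$, so the given curve equals $\delta\mapsto F(\delta,\delta)$ and passes through $\xi$ at $\delta=0$, with tangent vector there equal to $\partial_\delta F(0,0)+\partial_\epsilon F(0,0)$. For $\partial_\delta F(0,0)$ I freeze $\epsilon=0$: with $\eta=\exp(sX_1)\xi$, differentiating $\delta\mapsto\exp(-S(\delta)X_1)\eta$ by the chain rule gives $-S'(0)X_1$ evaluated at $\exp(-sX_1)\eta=\xi$, i.e.\ $-\frac{dS}{d\delta}(0)X_1$ at $\xi$; this is the first term of the claimed formula.

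Next I would freeze $\delta=0$, so the outer $X_1$-flows become $\exp(\mp sX_1)$, and recognize $\partial_\epsilon F(0,0)$ as the value at $\xi$ of the conjugated field $(\exp(sX_1))^{*}X_2$. The key Lie-theoretic input is that $s\mapsto(\exp(sX_1))^{*}X_2$ is smooth with $\frac{d^{k}}{ds^{k}}\big|_{s=0}(\exp(sX_1))^{*}X_2 = (\mathrm{ad}_{X_1})^{k}X_2 = [X_1,[X_1,\dots,[X_1,X_2]]]$ ($k$ brackets). Taylor's theorem in $s$ to order $r-1$ then writes this as $\sum_{k=0}^{r-1}\frac{s^{k}}{k!}(\mathrm{ad}_{X_1})^{k}X_2$ plus an integral remainder $E_{\xi}(s)$ that is smooth in $(s,\xi)$ and vanishes to order $\ge r$ in $s$ at $s=0$. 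Writing $j=k+1$, the polynomial part is precisely $\sum_{j=1}^{r}\sum_{|\gamma|=j}s^{j-1}c_\gamma X_\gamma$ with $c_\gamma=1/(j-1)!$ when $\gamma=(1,\dots,1,2)$ has length $j$ and $c_\gamma=0$ otherwise; these are genuine constants since only $X_1$ and $X_2$ enter.

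Finally I would handle $E_{\xi}(s)$. Its value at $\xi$ is a tangent vector at $\xi$, and since the commutators $\{X_{jk}\}_{1\le j\le r,\,1\le k\le n_j}$ of length $\le r$ restrict at $0$ to a basis of $T_0\mathbb{R}^N$ and hence, after shrinking the neighborhood, to a basis of $T_\xi\mathbb{R}^N$, inverting this frame (Cramer's rule) gives $E_\xi(s)=\sum_{j,k}e_{jk,\xi}(s)X_{jk}(\xi)$ with each $e_{jk,\xi}$ smooth in $(s,\xi)$; since the frame does not depend on $s$ while $E_\xi(s)$ vanishes to order $\ge r$ in $s$, so do the $e_{jk,\xi}$. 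Relabeling $X_{jk}=X_\gamma$ and setting $e_{\gamma,\xi}\equiv 0$ for the remaining $\gamma$ of length $\le r$ puts this in the form $\sum_{j=1}^{r}\sum_{|\gamma|=j}e_{\gamma,\xi}(s)X_\gamma$, completing the formula. The only points needing care --- and the closest thing to an obstacle --- are getting the sign right in the conjugation formula and checking that the Taylor remainder, after re-expansion in the fixed frame, still vanishes to order $\ge r$ in $s$; this matches the text's description of the lemma as an easy consequence of the Campbell-Hausdorff formula.
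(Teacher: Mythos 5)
Your proposal is correct and follows essentially the same route as the paper: split the $\delta$-derivative into the $-\frac{dS}{d\delta}(0)X_1$ contribution from the outer factor and the derivative of $\delta\mapsto\exp(-sX_1)\exp(\delta X_2)\exp(sX_1)\xi$, then expand the latter (the conjugation of $X_2$ by the flow of $X_1$) in powers of $s$ up to order $r-1$ with an $O(|s|^{r})$ remainder, exactly as the paper does via the Campbell--Hausdorff computation of Lemma~\ref{lem:CH2}. Your use of the Lie-series identity for the flow pullback plus Taylor's theorem, and the re-expansion of the remainder in the frame $\{X_{jk}\}$, are just a more explicit rendering of the same computation (and of the paper's unelaborated ``which has the desired form''), so there is nothing to correct.
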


\begin{lemma}\label{lem:CH2}
For any of the $X_{\gamma_0}$ with $|\gamma_0| = j_0$ and $1 \leq j_0 \leq r$, $$\delta \mapsto \exp(y \cdot X) \exp(\delta X_{\gamma_0}) \exp(-y \cdot X) \xi$$ is a smooth curve passing through $\xi$ when $\delta = 0$, and its tangent vector at $\delta = 0$ is $$\sum_{j=j_0}^r \sum_{|\gamma|=j} p_{\gamma_0,\gamma}(y) X_{\gamma} + \sum_{j=1}^r \sum_{|\gamma|=j} f_{\gamma_0,\gamma,\xi}(y) X_{\gamma}$$ evaluated at $\xi$, where $p_{\gamma_0,\gamma}(y)$ are homogeneous polynomials of $y$ of non-isotropic degrees $|\gamma|-j_0$, and $f_{\gamma_0,\gamma,\xi}(y)$ are smooth functions of $y$ that vanish to non-isotropic orders $\geq r-j_0+1$ at $y=0$.
\end{lemma}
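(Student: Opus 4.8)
The plan is to recognize the tangent vector as a pushed‑forward vector field and then Taylor expand it in $y$. Writing $A = y\cdot X$ and letting $\Phi = \exp(A)$ denote the time‑$1$ flow of $A$, the displayed curve is $\delta \mapsto \Phi\bigl(\exp(\delta X_{\gamma_0})(\Phi^{-1}\xi)\bigr)$. This is smooth because flows of smooth vector fields depend smoothly on time, on the initial point, and on parameters; it passes through $\xi$ at $\delta = 0$; and differentiating at $\delta = 0$ produces the vector $\bigl(\Phi_* X_{\gamma_0}\bigr)(\xi)$. Thus it suffices to analyze the vector field $Z(y) := (\exp(y\cdot X))_* X_{\gamma_0}$, which depends smoothly on $(y,\xi)$ in a neighborhood of $(0,0)$.

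Next I would expand $Z(y)$ using the Campbell--Hausdorff formula, equivalently by iterating the flow identity $\tfrac{d}{dt}(\exp(tA))_* X_{\gamma_0} = -\bigl[A,(\exp(tA))_* X_{\gamma_0}\bigr]$. Taylor's theorem in $y$, uniformly for $\xi$ in a compact neighborhood of $0$, then gives
\begin{equation*}
Z(y) \;=\; \sum_{m=0}^{r}\frac{(-1)^m}{m!}\,(\operatorname{ad}_{y\cdot X})^m X_{\gamma_0} \;+\; E(y,\xi),
\end{equation*}
where $E(y,\xi)$ is a smooth vector field vanishing to ordinary order $\geq r+1$ in $y$ at the origin, hence to non-isotropic order $\geq r+1$. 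Since each $X_{jk}$ is a commutator of $X_1,\dots,X_n$ of length $j$, expanding
\begin{equation*}
(\operatorname{ad}_{y\cdot X})^m X_{\gamma_0} \;=\; \sum y_{j_1 k_1}\cdots y_{j_m k_m}\,[X_{j_1 k_1},[\dots,[X_{j_m k_m},X_{\gamma_0}]]]
\end{equation*}
exhibits each resulting term as a $y$-monomial of non-isotropic degree $d = j_1 + \dots + j_m$ times a commutator of $X_1,\dots,X_n$ of length $L = d + j_0$.

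I would then split these terms according to whether $d \leq r - j_0$ (equivalently $L\leq r$) or $d \geq r - j_0 + 1$. When $L \leq r$, the Jacobi identity — a purely Lie-algebraic fact about the free Lie algebra — rewrites the commutator as a constant-coefficient combination of the left-nested commutators $X_\gamma$ with $|\gamma| = L$; collecting all such contributions yields exactly $\sum_{j=j_0}^{r}\sum_{|\gamma|=j} p_{\gamma_0,\gamma}(y)X_\gamma$, and $p_{\gamma_0,\gamma}(y)$, being a sum of $y$-monomials of non-isotropic degree $j - j_0$, is automatically homogeneous of non-isotropic degree $|\gamma| - j_0$. When $L \geq r+1$, and likewise for the remainder $E(y,\xi)$, I would use that (after shrinking $U_1$) the collection $\{X_\gamma : |\gamma|\leq r\}$ spans the tangent space throughout $U_1$, so each long commutator, as well as $E$ itself, can be written as $\sum_{|\gamma|\leq r}(\text{smooth function of the base point})\,X_\gamma$; evaluating at $\xi$ and multiplying by the accompanying $y$-monomial (which has non-isotropic degree $\geq r-j_0+1$ in the first case and $\geq r+1$ in the second) produces coefficients $f_{\gamma_0,\gamma,\xi}(y)$ that are smooth in $(y,\xi)$ and vanish to non-isotropic order $\geq r - j_0 + 1$ in $y$. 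Summing everything gives the asserted identity.

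The only real work is the bookkeeping: one must track that the length grading of iterated commutators is synchronized with the $y$-weight, so that the polynomials $p_{\gamma_0,\gamma}$ come out homogeneous of precisely degree $|\gamma|-j_0$, and one must check that every contribution of non-isotropic degree $> r - j_0$ — whether it comes from a long commutator or from the Taylor remainder — can be absorbed into the $f$-terms after re-expanding it in the over-determined frame $\{X_\gamma : |\gamma|\leq r\}$; it is exactly this re-expansion that forces the error terms to depend on $\xi$. No genuine obstacle arises, consistent with the statement that the lemma is an easy consequence of the Campbell--Hausdorff formula, and the companion Lemma~\ref{lem:CH1} is of the same nature.
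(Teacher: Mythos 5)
Your proposal is correct and follows essentially the same route as the paper: both boil down to the adjoint (Campbell--Hausdorff) expansion of the tangent vector, namely $(\exp(y\cdot X))_* X_{\gamma_0} = \sum_{m\le r}\tfrac{(-1)^m}{m!}(\operatorname{ad}_{y\cdot X})^m X_{\gamma_0}$ up to a remainder vanishing to order $\geq r+1$ in $y$, the paper obtaining this by Taylor-expanding the composed flows applied to a test function while you obtain it from the flow identity for the pushforward. The only difference is that you spell out the final bookkeeping (Jacobi-identity reduction to left-nested brackets and absorption of long commutators and the Taylor remainder into the $f$-terms via the spanning frame $\{X_\gamma:|\gamma|\le r\}$), which the paper compresses into ``which has the desired form.''
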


To prove Lemma~\ref{lem:decomp}, we need one more technical lemma that allows us to integrate away the variables we added in the lifting.

\begin{lemma}\label{lem:maximal}
Shrink $U_1$ if necessary and let $\varepsilon > 0$ be sufficiently small. Let $\eta \in C^{\infty}_c(G)$, and write $$I_{\lambda}\eta(y) = \lambda^{-\tilde{Q}} \eta(\lambda^{-1} \cdot y).$$ If $\Phi \in C^{\infty}_c(U_1)$, $\xi \in \{x_{11}=a\} \cap U_1$ and $\lambda > 0$ then
$$
\int_{|y|<\varepsilon} |\Phi|(\exp(y \cdot X)\xi) |I_{\lambda}\eta(y)| dy \leq C \lambda^{\frac{1}{Q}-1} M\mathcal{J}(a)
$$
where $\mathcal{J}$ is as in Lemma~\ref{lem:decomp} and $M$ is the Hardy-Littlewood maximal function on $\mathbb{R}$.
\end{lemma}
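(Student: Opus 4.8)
\textbf{Proof proposal for Lemma~\ref{lem:maximal}.}

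The plan is to recognize the left-hand side as the integral of $|\Phi|$ against the push-forward of the normalized bump $|I_\lambda\eta|$ under $y\mapsto\exp(y\cdot X)\xi$, and then to estimate that push-forward \emph{together with its distribution across the hyperplanes $\{x_{11}=t\}$}. A routine splitting of the range of $\lambda$ disposes of $\lambda$ bounded away from $0$: there one uses $\|I_\lambda\eta\|_{L^\infty}=\lambda^{-\tilde{Q}}\|\eta\|_{L^\infty}$ and the trivial estimate $\int_{\{|y|<\varepsilon\}}|\Phi|(\exp(y\cdot X)\xi)\,dy\lesssim M\mathcal{J}(a)$ (itself a crude instance of the change of variables below), together with $\lambda^{-\tilde{Q}}\le\lambda^{\frac1Q-1}$ for $\lambda\ge1$. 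So I may assume $\lambda$ as small as I wish; in particular $\mathrm{supp}(I_\lambda\eta)=\lambda\cdot\mathrm{supp}(\eta)\subset\{|y|<\varepsilon\}$. Shrinking $U_1$ I also arrange, as in the proof of Theorem~\ref{thm:PLA}, that $X_1=X_{11}$ is transverse to every hyperplane $\{x_{11}=\text{const}\}$ meeting $U_1$, and that $\{X_{jk}(\zeta)\}_{1\le j\le r,\ 1\le k\le n_j}$ is a basis of the tangent space at every $\zeta$ near $0$.

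Split $y=(y',y'')$, with $y'=(y_{jk})_{1\le k\le n_j}$ the $N$ coordinates attached to the basis commutators and $y''=(y_{jk})_{n_j<k\le\tilde{n}_j}$ the $\tilde{N}-N$ coordinates coming from the lifting. For each small fixed $y''$ the map $\Theta_{y''}\colon y'\mapsto\exp(y\cdot X)\xi$ is a diffeomorphism onto a neighbourhood of $\xi$ in $U_1$ (inverse function theorem plus the spanning hypothesis), and since $\Theta_0$ is the normal-coordinate map at $\xi$ and $U_1$ is totally normal, its Jacobian in the $x$-coordinates at $0$ is comparable to $1$, uniformly in $\xi$, in $y''$, and in small $y'$. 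Changing variables $y'\mapsto x$ in the slices $\{y''=\text{const}\}$ and using Fubini gives
$$\int_{\{|y|<\varepsilon\}}|\Phi|(\exp(y\cdot X)\xi)\,|I_\lambda\eta(y)|\,dy\ \le\ C\int_{U_1}|\Phi|(x)\,\rho(x)\,dx,\qquad \rho(x):=\int_{y''}\bigl|I_\lambda\eta\bigl(\Theta_{y''}^{-1}(x),\,y''\bigr)\bigr|\,dy''.$$
Because $\mathrm{supp}(I_\lambda\eta)\subset\{\,|y_{jk}|\le R\lambda^{j}\,\}$ and $\|I_\lambda\eta\|_{L^\infty}=\lambda^{-\tilde{Q}}\|\eta\|_{L^\infty}$, the $y''$-integral in $\rho(x)$ extends over a set of measure $\lesssim\lambda^{\sum_j j(\tilde{n}_j-n_j)}=\lambda^{\tilde{Q}-Q}$, so $\rho(x)\lesssim\lambda^{-\tilde{Q}}\lambda^{\tilde{Q}-Q}=\lambda^{-Q}$. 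Freezing in addition the coordinate $x_{11}=t$ controls the slab marginal: the surface $\Theta_{y''}^{-1}(\{x_{11}=t\})$ is, by the transversality of $X_1$, a graph over $(y_{jk})_{(j,k)\neq(1,1),\,1\le k\le n_j}$ with gradient $\lesssim1$, so it meets the box $\{|y_{jk}|\le R\lambda^j\}$ in a set of $(N-1)$-dimensional measure $\lesssim\lambda^{Q-1}$; combining with the two previous bounds gives, for $|t-a|\lesssim\lambda$ (and $0$ otherwise),
$$\kappa(t)\ :=\ \int_{\{x_{11}=t\}}\rho(x)\,d\overline{x}\ \lesssim\ \lambda^{-\tilde{Q}}\cdot\lambda^{Q-1}\cdot\lambda^{\tilde{Q}-Q}\ =\ \lambda^{-1}.$$

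With $\rho\lesssim\lambda^{-Q}$ and $\kappa\lesssim\lambda^{-1}$ in hand, the lemma follows from Hölder's inequality and the one-dimensional Hardy--Littlewood maximal theorem. For each $t$, Hölder in $\overline{x}$ with exponents $Q$ and $Q'=\tfrac{Q}{Q-1}$ gives $\int_{\{x_{11}=t\}}|\Phi|\,\rho\,d\overline{x}\le\mathcal{J}(t)\,\|\rho(t,\cdot)\|_{L^{Q'}(d\overline{x})}$, and
$$\|\rho(t,\cdot)\|_{L^{Q'}(d\overline{x})}^{Q'}\ \le\ \|\rho(t,\cdot)\|_{L^\infty}^{\,Q'-1}\,\kappa(t)\ \lesssim\ \lambda^{-Q(Q'-1)}\lambda^{-1}\ =\ \lambda^{-Q'-1},$$
so $\|\rho(t,\cdot)\|_{L^{Q'}(d\overline{x})}\lesssim\lambda^{-1-1/Q'}=\lambda^{\frac1Q-2}$ for $|t-a|\lesssim\lambda$. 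Integrating in $t$,
$$\int_{U_1}|\Phi|\,\rho\,dx\ \lesssim\ \lambda^{\frac1Q-2}\int_{|t-a|\lesssim\lambda}\mathcal{J}(t)\,dt\ \lesssim\ \lambda^{\frac1Q-2}\cdot\lambda\cdot M\mathcal{J}(a)\ =\ \lambda^{\frac1Q-1}M\mathcal{J}(a),$$
which is the assertion.

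The \emph{main obstacle} is the uniformity hidden in the change-of-variables steps: the comparability of the Jacobian of $\Theta_{y''}$ to $1$, the graph bound with gradient $\lesssim1$ for $\Theta_{y''}^{-1}(\{x_{11}=t\})$, and the containment $\mathrm{supp}(I_\lambda\eta)\subset\{|y_{jk}|\le R\lambda^{j}\}$ with the two dimension counts $\tilde{Q}-Q$ and $Q-1$, must all hold with constants independent of $\xi$ (hence of $a$), of the extra variables $y''$, and of $\lambda$. This is precisely where the Rothschild--Stein approximation of the lifted fields $\tilde{X}_k$ by the left-invariant $Y_k$ on the homogeneous group $G$ enters, as it makes the relevant exponential maps uniformly comparable to the group exponential, so that the non-isotropic boxes behave as they do on $G$ and the counts come out exactly. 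Note finally that the bound $\rho\lesssim\lambda^{-Q}$ by itself is not sharp at points $\xi$ where the $X_k$ are less degenerate than at $0$; it is the slab estimate $\kappa\lesssim\lambda^{-1}$ that carries the correct power of $\lambda$, and only the two together yield the stated inequality.
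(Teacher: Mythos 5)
Your proposal is correct, and it reaches the estimate by a genuinely different route from the paper. The paper changes variables $y'\mapsto x=\exp(y\cdot X)\xi$ for each fixed $y''$ exactly as you do, but then proves a containment statement: using the Campbell--Hausdorff expansion of the coordinates $h_{jk}(\exp(y\cdot X)\xi)$ and inverse function theorems for the layer-truncated maps $g^{(j)}_{\xi,y''}$, it shows that the image of $\{|y'|<\lambda\}$ lies in a ``twisted box'' $S_\lambda$ whose $\{x_{11}=t\}$-slices have $\overline{x}$-measure $\lesssim\lambda^{Q-1}$ and which projects onto an interval $|x_{11}-a|\lesssim\lambda$; a single H\"older per slice plus the maximal function then finishes. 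You instead estimate the pushforward density itself: $\|\rho\|_{L^\infty}\lesssim\lambda^{-Q}$ from the $y''$-count $\lambda^{\tilde{Q}-Q}$, and the marginal $\kappa(t)\lesssim\lambda^{-1}$, supported in $|t-a|\lesssim\lambda$, from the graph representation of the preimage of each slice (transversality of $X_1$ to $\{x_{11}=\mathrm{const}\}$ plus one implicit function theorem with uniform derivative bounds), and then the interpolation bound $\|\rho(t,\cdot)\|_{L^{Q'}}^{Q'}\le\|\rho(t,\cdot)\|_{L^\infty}^{Q'-1}\kappa(t)$ replaces the paper's containment claim; your exponent arithmetic checks out. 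Both arguments rest on the same three counts ($\lambda^{\tilde{Q}-Q}$ in $y''$, $\lambda^{Q-1}$ per slice, length $\lambda$ in $x_{11}$), but your per-slice marginal bound is obtained more directly and avoids the paper's inductive construction of the functions $f_{j,\xi,y''}$; what you must still supply, and correctly flag, is the uniform (in $\xi$, $y''$, $t$) bound on the Jacobian relating $d\overline{x}$ on $\{x_{11}=t\}$ to the measure on the graph, which follows from smoothness, compactness and the arranged transversality. One correction to your closing remarks: the Rothschild--Stein approximation proper (that $R_{k,\tilde{\xi}}=\tilde{X}_k-Y_k^{\tilde{\xi}}$ has local degree $\le 0$) is not what delivers these uniformities and is not used in this lemma by either argument; what matters here is only the grading convention assigning degree $j$ to $y_{jk}$ (so that the support of $I_\lambda\eta$ is the box $\{|y_{jk}|\lesssim\lambda^j\}$ and the exponents $\tilde{Q}-Q$ and $Q-1$ come out), together with the Campbell--Hausdorff (Taylor) expansion of the exponential map of the $X_{jk}$'s; the group approximation is needed for the integration by parts elsewhere in the proof of Lemma~\ref{lem:decomp}, not here.
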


Here $|y|$ denotes the non-isotropic norm of $y$ on $G$; i.e. $$|y| = \max_{j,k} |y_{jk}|^{1/j}.$$ Assuming these lemma for the moment, we shall complete our proof of Lemma~\ref{lem:decomp}.

\begin{proof}[Proof of Lemma~\ref{lem:decomp} continued]
We shall begin by choosing a suitable neighborhood $U$ of $0$. Let $U_1$ be a sufficiently small neighborhood of $0$ and $\varepsilon > 0$ be sufficiently small such that the previous assertions and lemma hold. Take a section $\sigma \colon U_1 \to \tilde{U_1}$ such that $\pi (\sigma (\xi)) = \xi$ for all $\xi \in U_1$. Then choose a neighborhood $U_2 \subseteq U_1$ of $0$ and reduce $\varepsilon$ if necessary such that $\exp(y\cdot \tilde{X}) \sigma(\xi) \in \tilde{U_1}$ for any $\xi \in U_2$ and any $|y| < \varepsilon$. Then it follows that $$\pi(\exp(y \cdot \tilde{X})\sigma(\xi)) = \exp(y\cdot X) \xi$$ for all such $\xi$ and $y$. This is because then the curve
\begin{align*}
[-1,1] &\to U_1\\
s &\mapsto \pi(\exp(s y\cdot \tilde{X})\sigma(\xi)) 
\end{align*}
is well-defined, and is the integral curve of $d\pi(y\cdot \tilde{X}) = y\cdot X$ beginning at $\pi(\sigma(\xi))=\xi$. The curve is thus $\exp(sy \cdot X) \xi$ for all $s \in [-1,1]$ (in particular, for $s = 1$).

We shall also apply the implicit function theorem to the equation $$\chi = \exp(sX_1) \xi$$ at the point $(\chi,s,\xi) = (0,0,0)$ and choose a neighborhood $U \Subset U_2$ of $0$ with the following property: if $I = \{a \in \mathbb{R}: \{x_{11}=a\} \cap U \ne \emptyset\}$, then for any $a \in \overline{I}$ and any point $\chi \in \overline{U}$, there is some $\xi = \xi(a,\chi) \in \{x_{11} = a\} \cap U_2$ and $s = s(a,\chi) \in (-1,1)$ such that $\chi = \exp(sX_1)\xi$. $\xi$ and $s$ will be taken to be smooth functions of $a$ and $\chi$. 

This fixes our choice of neighborhood $U$ of $0$ and a constant $\varepsilon > 0$. We now turn to construct the decomposition of $\Phi$.

Given $\Phi \in C^{\infty}_c(U)$, $a \in I$, and a parameter $\lambda > 0$, let $\eta_0 \in C^{\infty}_c(G)$ be supported on $\{|y| < \varepsilon\}$ with $\eta_0(0)=1$ with $\varepsilon > 0$ chosen as above. For any $\chi \in U$, write $\chi$ as $\chi=\exp(sX_1)\xi$ with $\xi = \xi(a,\chi)$ and $s=s(a,\chi)$ as above. Define $\Phi_2^a$ on $U$ by setting
\begin{equation} \label{eq:Phi2adef}
\Phi_2^a(\chi) = \int_{\mathbb{R}^{\tilde{N}}} \Phi(\exp(y \cdot X)\xi) I_{\sqrt{\lambda^2+s^2}}\eta_0(y)\eta_0(y) dy. 
\end{equation}

Since $\eta_0$ is supported on $\{|y|<\varepsilon\}$ and $\xi \in U_2$, in the integral $\exp(y \cdot X)\xi$ could also be written as $\pi(\exp(y \cdot \tilde{X})\tilde{\xi})$ where $\tilde{\xi} := \sigma(\xi)$. For functions $\Phi$ defined on $U_1$, we shall write $$\tilde{\Phi} = \Phi \circ \pi$$ for its pullback via $\pi$. Then $\Phi_2^a$ can also be written as $$\Phi_2^a(\chi) = \int_{\mathbb{R}^{\tilde{N}}} \tilde{\Phi}(\exp(y \cdot \tilde{X})\tilde{\xi}) I_{\sqrt{\lambda^2+s^2}}\eta_0(y) \eta_0(y) dy.$$ It follows that for $\xi \in \{x_{11}=a\} \cap U$,
\begin{equation}\label{eq:Phi1adef}
\Phi_1^a(\xi) = -\int_0^{\lambda} \int_{\mathbb{R}^{\tilde{N}}} \tilde{\Phi}(\exp(y \cdot \tilde{X})\tilde{\xi}) \frac{d}{d\lambda}I_{\lambda}\eta_0(y) \eta_0(y) dy d\lambda. 
\end{equation}
We shall estimate this as follows.

First recall that by Lemma 3.1 of \cite{MR850547}, 
$$\frac{d}{d\lambda} I_{\lambda} \eta_0(y) = \sum_{k=1}^n Y_k I_{\lambda} \eta_k (y)$$
for some functions $\eta_k \in C^{\infty}_c(G)$. For brevity of notations, in the remainder of this proof, we shall often drop the subscript $k$ in $\eta_k$ and just write $\eta$ for any function in $C^{\infty}_c(G)$. Then the inner integral in (\ref{eq:Phi1adef}) is just
\begin{align*}
-\sum_{k=1}^n \int_{\mathbb{R}^{\tilde{N}}} Y_k (\tilde{\Phi}(\exp(y \cdot \tilde{X})\tilde{\xi})) I_{\lambda}\eta(y) \eta_0(y) dy 
+ \text{errors}.
\end{align*}
The errors arise when the $Y_k$ differentiates $\eta_0(y)$ upon integration by parts; they can be estimated by 
$$C\int_{|y|<\varepsilon} |\Phi|(\exp(y \cdot X)\xi) |I_{\lambda}\eta(y)| dy,$$
and we shall call such terms acceptable errors. To tackle the main term, note that 
\begin{align*}
Y_k (\tilde{\Phi}(\exp(y \cdot \tilde{X})\tilde{\xi})) 
&= (Y_k^{\tilde{\xi}} \tilde{\Phi})(\exp(y \cdot \tilde{X})\tilde{\xi})\\
&= (\tilde{X_k} \tilde{\Phi})(\exp(y \cdot \tilde{X})\tilde{\xi}) + (R_{k,\tilde{\xi}}^{\tilde{\xi}} \tilde{\Phi})(\exp(y \cdot \tilde{X})\tilde{\xi})\\
&= (\tilde{X_k} \tilde{\Phi})(\exp(y \cdot \tilde{X})\tilde{\xi}) + R_{k,\tilde{\xi}} (\tilde{\Phi}(\exp(y \cdot \tilde{X})\tilde{\xi}))
\end{align*}
where the vector fields $R_{k,\tilde{\xi}}$ have local degrees $\leq 0$ at $0$. If $R$ is a vector field on $G$ that has local degree $\leq l$ at $0$, then 
\begin{equation} \label{eq:remain}
|R I_{\lambda} \eta(y)| \leq C \lambda^{-l} |I_{\lambda}\eta'(y)|
\end{equation} 
when $|y| < \varepsilon$. Here $\eta'$ is just some function in $C^{\infty}_c(G)$, and again we shall just write $\eta$ for $\eta'$. Hence the integral of the terms involving $R_{k,\tilde{\xi}}$ contributes only acceptable errors upon integration by parts. To deal with the terms involving $\tilde{X_k} \tilde{\Phi}$, we observe that $(\tilde{X}_{\gamma} \tilde{\Phi}) (\tilde{\chi}) = (X_{\gamma} \Phi)(\pi(\tilde{\chi}))$ for all $\tilde{\chi} \in \tilde{U_1}$. Hence 
\begin{equation} \label{eq:lift}
(\tilde{X}_{\gamma} \tilde{\Phi}) (\exp(y \cdot \tilde{X})\tilde{\xi}) = (X_{\gamma} \Phi) (\exp(y \cdot X)\xi) 
\end{equation}
for $\xi \in U_2$ and $|y|<\varepsilon$,
and in particular $(\tilde{X_k} \tilde{\Phi})(\exp(y \cdot \tilde{X})\tilde{\xi}) = (X_k \Phi)(\exp(y \cdot X)\xi).$ Putting everything together, 
\begin{equation}\label{eq:Phi1}
|\Phi_1^a(\xi)| \leq C \int_0^{\lambda} \int_{|y|<\varepsilon} (|\nabla_b \Phi| + |\Phi|) (\exp(y \cdot X)\xi) |I_{\lambda}\eta(y)| dy d\lambda.
\end{equation}

Next we estimate $|\nabla_b \Phi_2^a(\chi)|$ for $\chi \in U$. Write $\chi = \exp(s X_1) \xi$ with $\xi = \xi(a,\chi)$ and $s=s(a,\chi)$. Then
\begin{align*}
(X_1 \Phi_2^a)(\chi) 
&= \int_{\mathbb{R}^{\tilde{N}}} \tilde{\Phi}(\exp(y \cdot \tilde{X})\tilde{\xi}) \left. \frac{d}{d\delta} \right|_{\delta = \sqrt{\lambda^2 + s^2}} I_{\delta}\eta_0(y) \eta_0(y) dy \frac{s}{\sqrt{\lambda^2+s^2}}.
\end{align*}
Note that $\frac{s}{\sqrt{\lambda^2+s^2}} \leq 1$. Arguing as before we get 
\begin{equation}\label{eq:Phi2d1}
|(X_1 \Phi_2^a) (\chi)| \leq C \int_{|y|<\varepsilon} (|\nabla_b \Phi| + |\Phi|) (\exp(y \cdot X)\xi) |I_{\sqrt{\lambda^2 + s^2}}\eta(y)| dy.
\end{equation}

To estimate $(X_2\Phi_2^a)(\chi)$, note that  
$$
(X_2 \Phi_2^a)(\chi) = \left. \frac{d}{d\delta} \right|_{\delta = 0} \Phi_2^a (\exp(\delta X_2) \exp(sX_1) \xi).
$$
Given $\delta$ close to 0, 
choose $S(\delta)=S(\delta,a,\chi)$ such that 
$$
\exp(-S(\delta)X_1) \exp(\delta X_2) \exp(sX_1) \xi \in \{x_{11}=a\}\cap U_2.
$$ 
This is a smooth function of $\delta$, $a$ and $\chi$ with $S(0) = s$. Then $(X_2 \Phi_2^a)(\chi)$ is given by
\begin{align*}
\left. \frac{d}{d\delta} \right|_{\delta = 0} \int_{\mathbb{R}^{\tilde{N}}} \! \Phi(\exp(y \cdot X)\exp(-S(\delta)X_1) \exp(\delta X_2) \exp(sX_1) \xi) I_{\sqrt{\lambda^2+S(\delta)^2}}\eta_0(y) \eta_0(y) dy. 
\end{align*}
If the derivative fall on $I_{\sqrt{\lambda^2+S(\delta)^2}}\eta_0$, then we get 
$$
\int_{\mathbb{R}^{\tilde{N}}} \Phi(\exp(y \cdot X) \xi) \left. \frac{d}{d\delta} \right|_{\delta = \sqrt{\lambda^2 + s^2}} I_{\delta}\eta_0(y) \eta_0(y) dy \frac{s}{\sqrt{\lambda^2+s^2}} \frac{dS}{d\delta}(0,a,\chi)
$$
and this is bounded by 
\begin{equation}\label{eq:bdd2}
C \int_{|y|<\varepsilon} (|\nabla_b \Phi| + |\Phi|) (\exp(y \cdot X)\xi) |I_{\sqrt{\lambda^2 + s^2}}\eta(y)| dy
\end{equation}
just as before, because $\frac{dS}{d\delta}(0,a,\chi)$ is bounded for $a \in \overline{I}$ and $\chi \in \overline{U}$. If the derivative fall on $\Phi$, we shall invoke Lemma~\ref{lem:CH1} to show that the same bound holds for the integral. In fact
\begin{align*}
&  \left. \frac{d}{d\delta} \right|_{\delta = 0} \Phi(\exp(y \cdot X)\exp(-S(\delta)X_1) \exp(\delta X_2)  \exp(sX_1) \xi) \\
=& - \frac{dS}{d\delta}(0,a,\chi)  \left. \frac{d}{d\delta} \right|_{\delta = 0} \Phi(\exp(y \cdot X) \exp(\delta X_1)  \xi)  \\
&\quad +\sum_{j=1}^r \sum_{|\gamma|=j} s^{j-1} c_{\gamma} \left. \frac{d}{d\delta} \right|_{\delta = 0} \Phi(\exp(y \cdot X) \exp(\delta X_{\gamma}) \xi)  \\
&\quad +\sum_{j=1}^r \sum_{|\gamma|=j} e_{\gamma,\xi}(s) \left. \frac{d}{d\delta} \right|_{\delta = 0} \Phi(\exp(y \cdot X) \exp(\delta X_{\gamma}) \xi)\\
=& I + II + III.
\end{align*}
The contribution of $I$ in the integral can be absorbed into that of $II$ once we note that $\frac{dS}{d\delta}(0,a,\chi)$ is bounded. To estimate of the contribution of $II$, fix $\gamma_0$ with $|\gamma_0| = j_0$ and consider 
\begin{equation} \label{eq:integralX_jk}
\int_{\mathbb{R}^{\tilde{N}}} s^{j_0-1} \left. \frac{d}{d\delta} \right|_{\delta = 0} \Phi(\exp(y \cdot X) \exp(\delta X_{\gamma_0}) \xi) I_{\sqrt{\lambda^2+s^2}}\eta_0(y) \eta_0(y) dy.
\end{equation}
Using Lemma~\ref{lem:CH2}, the derivative inside this integral is equal to
\begin{align}
\sum_{j=j_0}^r \sum_{|\gamma|=j} p_{\gamma_0, \gamma}(y) (X_{\gamma}\Phi)(\exp(y \cdot X) \xi) + \sum_{j=1}^r \sum_{|\gamma|=j} f_{\gamma_0,\gamma,\xi}(y) (X_{\gamma}\Phi)(\exp(y \cdot X) \xi) \label{eq:pointwiseX_jk}
\end{align}
where $p_{\gamma_0,\gamma}(y)$ are homogeneous polynomials of non-isotropic degrees $|\gamma|-j_0$ and $f_{\gamma_0,\gamma,\xi}(y)$ vanishes to non-isotropic orders $\geq r-j_0+1$ at $y=0$. The second term is just a sum of
\begin{align*}
f_{\gamma_0,\gamma,\xi}(y)(X_{\gamma}\Phi)(\exp(y \cdot X) \xi) 
=& f_{\gamma_0,\gamma,\xi}(y) (Y_{\gamma}+ R_{\gamma,\tilde{\xi}})(\tilde{\Phi}(\exp(y \cdot \tilde{X}) \tilde{\xi})) 
\end{align*}
and the vector field $f_{\gamma_0,\gamma,\xi}(y) (Y_{\gamma}+ R_{\gamma,\tilde{\xi}})$ has local degree $\leq j_0-1$ at $y=0$. Hence by (\ref{eq:remain}), the contribution of this term in (\ref{eq:integralX_jk}) is bounded by
$$ \sum_{i=0}^{j_0-1} \frac{s^{j_0-1}}{(\lambda^2+s^2)^{i/2}} \int_{|y|<\varepsilon} |\Phi|(\exp(y \cdot X) \xi) |I_{\sqrt{\lambda^2+s^2}}\eta(y)| dy$$
upon integration by parts. But since now $|s|<\varepsilon$, the contribution of this term is bounded by 
$$C\int_{|y|<\varepsilon} |\Phi|(\exp(y \cdot X)\xi) |I_{\sqrt{\lambda^2+s^2}}\eta(y)| dy.$$
We shall also call such terms acceptable errors. Now to deal with the first term in (\ref{eq:pointwiseX_jk}), observe that by (\ref{eq:lift}) and (\ref{eq:R_k2}),
\begin{align*}
p_{\gamma_0,\gamma}(y) (X_{\gamma}\Phi)(\exp(y \cdot X) \xi)
&= p_{\gamma_0,\gamma}(y) (Y_{\gamma} + R_{\gamma,\tilde{\xi}}) (\tilde{\Phi}(\exp(y \cdot \tilde{X}) \tilde{\xi})) 
\end{align*}
where $R_{\gamma,\tilde{\xi}}$ has local degree $\leq |\gamma|-1$ at $0$. It follows that $p_{\gamma_0,\gamma}(y)R_{\gamma,\tilde{\xi}}$ has local degree $\leq j_0-1$ at $0$, and the terms in the integral (\ref{eq:integralX_jk}) that involves $p_{\gamma_0,\gamma}(y) R_{\gamma,\tilde{\xi}}$ is an acceptable error just as above. We are left with estimating
$$
s^{j_0-1} \int_{\mathbb{R}^{\tilde{N}}} \sum_{j=j_0}^r \sum_{|\gamma|=j} p_{\gamma_0,\gamma}(y) Y_{\gamma} (\tilde{\Phi}(\exp(y \cdot \tilde{X}) \tilde{\xi})) I_{\sqrt{\lambda^2+s^2}}\eta_0(y) \eta_0(y) dy.
$$
If we write each $Y_{\gamma}$ as a commutator of length $|\gamma|$ and integrate by parts $|\gamma|-1$ times, we get
$$\int_{\mathbb{R}^{\tilde{N}}} \sum_{k=1}^n \sum_{l=0}^{r-j_0} \sum_{i=0}^{j_0+l-1} \frac{s^{j_0-1} q_l(y)}{(\lambda^2+s^2)^{i/2}}(Y_{k}^{\tilde{\xi}}\tilde{\Phi})(\exp(y \cdot \tilde{X}) \tilde{\xi}) I_{\sqrt{\lambda^2+s^2}}\eta(y) \eta(y) dy$$
where $q_l(y)$ is a homogeneous polynomial of non-isotropic degree $l$. Approximating $Y_k^{\tilde{\xi}}$ by $X_k$ as in (\ref{eq:R_k}), and noting that $$\frac{s^{j_0-1} q_l(y)}{(\lambda^2+s^2)^{(j_0+l-1)/2}} I_{\sqrt{\lambda^2+s^2}}\eta(y) = \frac{s^{j_0-1}}{(\lambda^2+s^2)^{(j_0-1)/2}} I_{\sqrt{\lambda^2+s^2}}\eta'(y) \leq I_{\sqrt{\lambda^2+s^2}}\eta'(y)  $$ for some $\eta' \in C^{\infty}_c(G)$, we see that the terms with $i = j_0+l-1$ are bounded by (\ref{eq:bdd2}) as well. Integrating by parts in $Y_k$, we see that the terms with $i < j_0+l-1$ are acceptable errors. Altogether, summing over $\gamma_0$ in (\ref{eq:integralX_jk}), we see that the contribution of $II$ in the original integral is controlled by (\ref{eq:bdd2}). Finally, in a similar way, we conclude that the contribution of $III$ is only an acceptable error, and hence 
\begin{equation}\label{eq:Phi2d2}
|(X_2\Phi_2^a)(\chi)| \leq C\int_{|y|<\varepsilon} (|\nabla_b\Phi|+|\Phi|)(\exp(y \cdot X)\xi) |I_{\sqrt{\lambda^2+s^2}}\eta(y)| dy. 
\end{equation}
Similarly we have the same estimate for $X_k\Phi_2^a$ for $2 \leq k \leq n$.

It is now easy to complete the proof of Lemma~\ref{lem:decomp} by appealing to Lemma~\ref{lem:maximal}, using (\ref{eq:Phi1}), (\ref{eq:Phi2d1}), (\ref{eq:Phi2d2}) and (\ref{eq:Phi2adef}). 
\end{proof}

We shall now prove the Lemma we used in proving Lemma~\ref{lem:decomp}.

\begin{proof}[Proof of Lemma~\ref{lem:maximal}]
For $y \in \mathbb{R}^{\tilde{N}}$, we write $y=(y',y'')$ where 
$$
y'=(y_{jk})_{1 \leq j \leq r, 1 \leq k \leq n_j} \in \mathbb{R}^N
$$ 
and $y''$ denote the remaining variables. We shall also write $y'=(y_1',\dots,y_r')$ where $y_j'= (y_{jk})_{1 \leq k \leq n_j}$, and introduce the shorthand $y^{(j)} = (y_1',\dots,y_j')$. Define non-isotropic norms $|y'|=|(y',0)|$, $|y''|=|(0,y'')|$ and $|y^{(j)}| = |(y^{(j)},0,\dots,0)|$. For $x \in \mathbb{R}^N$ sufficiently close to $0$, we shall also write $x = (x_1, \dots, x_r)$ with $x_j = (x_{jk})_{1 \leq k \leq n_j}$ and define non-isotropic norms $|x_j| = \max_{1 \leq k \leq n_j} |x_{jk}|^{1/j}$ for each $j$. 

Now for any given $\xi \in U_1$ and $y'' \in \mathbb{R}^{\tilde{N}-N}$, consider the map $y' \mapsto x(\xi,y) \in \mathbb{R}^N$ where $x(\xi,y)$ is defined by 
\begin{equation}\label{eq:u} 
x(\xi,y) = \exp(y \cdot X)\xi.
\end{equation}
By shrinking $U_1$ if necessary and taking $\varepsilon > 0$ to be sufficiently small, according to the inverse function theorem, for any $y''$ with $|y''|<\varepsilon$ and for any $\xi \in U_1$, the map is a diffeomorphism from the set $\{|y'|<\varepsilon\}$ to a neighborhood of $0$ in $\mathbb{R}^N$.
By the Campbell-Hausdorff formula, if $h_{jk}$ denote the coordinate function in the normal coordinates at $0$, i.e. $h_{jk}(x) = x_{jk}$ in $U_1$, then for all $|y|<\varepsilon$ and $\xi \in U_1$, 
$$
h_{jk}(x(\xi,y)) = \sum_{p=0}^r \frac{1}{p!}((y \cdot X)^p h_{jk})(\xi) + O(|y|^{r+1}).
$$ 
Here $O(|y|^j)$ shall always denote a term $\leq C|y|^j$ with $C$ independent of $\xi$. It follows that 
\begin{equation}\label{eq:changevar}
h_{jk}(x(\xi,y)) = h_{jk}(\xi) + \sum_{1 \leq |\alpha| \leq r} a_{jk,\alpha}(\xi) y^{\alpha} + O(|y|^{r+1})
\end{equation}
for some smooth functions $a_{jk,\alpha}$ of $\xi$. For each $1 \leq j_0 \leq r$, define $g_{jk,\xi,y''}^{(j_0)}(y^{(j_0)})$ to be the sum of the terms on the right hand side of the above equation whose non-isotropic degrees in $y$ are $\leq j_0$ (note that this depends only on $y^{(j_0)}$ and $y''$ but not $y_{j_0+1}',\dots,y_r'$) and let $g_{\xi,y''}^{(j_0)}$ be the map $$y^{(j_0)} \mapsto \left(g_{jk,\xi,y''}^{(j_0)}(y^{(j_0)})\right)_{1 \leq j \leq j_0, 1 \leq k \leq n_j}.$$ 
By shrinking $U_1$ and decreasing $\varepsilon$ again if necessary, using the inverse function theorem, we may assume that for any $\xi \in U_1$, $|y''|<\varepsilon$, and $1 \leq j \leq r$, the map $g_{\xi,y''}^{(j)}$ is a diffeomorphism from the set $\{|y^{(j)}|<\varepsilon\}$ to its image. By taking $\varepsilon$ sufficiently small, we may also assume that for all such $\xi$, $y''$ and $j$, 
\begin{equation}\label{eq:comparable}
|g_{\xi,y''}^{(j)}(y_1^{(j)})-g_{\xi,y''}^{(j)}(y_2^{(j)})| \simeq |y_1^{(j)}-y_2^{(j)}|
\end{equation} 
for all $|y_1^{(j)}|, |y_2^{(j)}| < \varepsilon$, with implicit constants independent of $\xi$ and $y''$. We may also take some $\varepsilon_1 < \varepsilon$ so that for all such $\xi$, $y''$ and $j$, the $\varepsilon_1$-neighborhood of the image of $\{|y'|<\varepsilon_1\}$ under $g_{\xi,y''}^{(j)}$ is contained in the image of $\{|y'|<\varepsilon\}$ under the same map. 

We claim that there exists a small constant $c < 1$ such that for all $\lambda < c\varepsilon$, $|y''|<\lambda$ and $\xi \in U_1$, the map $$y' \mapsto x(\xi,y)$$ in (\ref{eq:u}) maps the set $\{|y'|<\lambda\}$ into the set $$S_{\lambda}:=\{|x_1-h_1(\xi)|, |x_2-f_{1,\xi,y''}(x_1)|, \dots, |x_r - f_{r-1,\xi,y''}(x_1,\dots,x_{r-1})| \leq C\lambda\}$$ for some smooth functions $f_{j,\xi,y''}$ of $(x_1,\dots,x_{j-1})$, where $h_1 = (h_{1k})_{1\leq k \leq n_1}$ are the first $n_1$ coordinate functions in normal coordinates at 0 and $C$ is a constant that does not depend on $\xi$, $y''$ and $\lambda$. The lemma follows from the claim: for $\xi \in \{x_{11}=a\} \cap U_1$, if $\lambda < c\varepsilon$, we can make a change of variable $y' \mapsto x = x(\xi,y)$ in the integral to be estimated and bound that by
\begin{align*}
C\lambda^{-\tilde{Q}} \int_{|y''|<\lambda} \int_{|x_1-h_1(\xi)| \leq C\lambda} \cdots \int_{|x_r - f_{r-1,\xi,y''}(x_1,\dots,x_{r-1})| \leq C\lambda} |\Phi|(x) dx_r \dots dx_1 dy''
\end{align*}
because the Jacobian of the change of variable $J(\xi,x,y'')$ is uniformly bounded in $\xi$, $x$ and $y''$. Using Holder's inequality successively, this is bounded by
\begin{align*}
&C \lambda^{-\tilde{Q}} \int_{|y''|<\lambda} \int_{|x_{11}-a|\leq C\lambda} \left(\int_{\overline{x} \in \mathbb{R}^{N-1}} |\Phi|^Q(x) d\overline{x} \right)^{\frac{1}{Q}}  \left(\lambda^{Q-1}\right)^{\frac{Q-1}{Q}} dx_{11} dy'' \\
\leq & C \lambda^{-\tilde{Q}} \lambda^{\tilde{Q}-Q} \left(\lambda^{Q-1}\right)^{\frac{Q-1}{Q}} \lambda M\mathcal{J}(a) \\
\leq & C \lambda^{\frac{1}{Q}-1} M\mathcal{J}(a)
\end{align*}
where $\mathcal{J}$ is as in the Lemma and $M$ is the standard Hardy-Littlewood maximal function on $\mathbb{R}$. If $\lambda > c \varepsilon$, the estimate is only easier. Therefore it remains to prove the claim.

Let $\lambda < c\varepsilon$, $|y|<\lambda$ and $\xi \in U_1$. Let $x = x(\xi,y)$. We shall show that $x \in S_{\lambda}$. Write $x = (x_1, \dots, x_r)$, $x^{(j)} = (x_1, \dots, x_j)$ as we did for $y'$. First, from (\ref{eq:changevar}), 
$$
x_1 = h_1(\xi) + O(\lambda)
$$
with implicit constant independent of $\xi$ and $y$. Next, for $1 \leq j < r$, by definition of $g^{(j)}_{\xi,y''}$, 
$$
x^{(j)} = g^{(j)}_{\xi,y''}(y^{(j)}) + O(\lambda^{j+1}).
$$ 
Since $\lambda < c\varepsilon$, by taking $c$ sufficiently small, the term $O(\lambda^{j+1})$ can be made smaller than $\varepsilon_1$. By our choice of $\varepsilon_1$, $x^{(j)}$ is thus in the image of $\{|y'|<\varepsilon\}$ under $g_{\xi,y''}^{(j_0)}$. 
As a result, by (\ref{eq:comparable}),  
$$
y^{(j)} = G^{(j)}_{\xi,y''}(x^{(j)}) + O(\lambda^{j+1})
$$
with implicit constants independent of $\xi$ and $y''$,
where $G^{(j)}_{\xi,y''}$ is the inverse of the function $g^{(j)}_{\xi,y''}$. Hence by (\ref{eq:changevar}) again, looking only at terms of non-isotropic degrees $\leq j$ and substituting $y^{(j)}$ for $G^{(j)}_{\xi,y''}(x^{(j)})+ O(\lambda^{j+1})$, we get
$$
x_{j+1} = f_{j,\xi,y''}(x^{(j)}) + O(\lambda^{j+1})
$$
for some function $f_{j,\xi,y''}$ of $x^{(j)}$, with implicit constant independent of $\xi$ and $y''$. Hence $x \in S_{\lambda}$, and this completes the proof of the claim.
\end{proof}

\begin{proof}[Proof of Lemma~\ref{lem:CH2}]
Fix $\gamma_0$ with $|\gamma_0| = j_0$ and  $1 \leq j_0 \leq r$. Let $\phi$ be any smooth function near $\xi$. The Taylor expansion of the function $$\phi(\exp(y \cdot X) \exp(\delta X_{\gamma_0}) \exp(-y \cdot X) \xi)$$ around $y=0$ and $\delta = 0$ is given by
$$\sum_{j=0}^{r-1} \frac{1}{j!}(-y \cdot X)^j \sum_{k=0}^1 \frac{1}{k!}(\delta X_{\gamma_0})^k \sum_{l=0}^{r-1} \frac{1}{l!}(y \cdot X)^l \phi(\xi) + O(|y|^{r}, \delta^2).$$ By the Campbell-Hausdorff formula, this is equal to 
\begin{align*}
\left(\sum_{i=0}^{r-1} \frac{1}{i!} \left(\delta X_{\gamma_0} + \delta \sum_{j=1}^{r-1} d_j ad(-y \cdot X)^j X_{\gamma_0} \right)^i \phi\right) (\xi) + O(|y|^{r},\delta^2)
\end{align*}
where $c_j$ and $d_j$ are absolute constants. Differentiating in $\delta$ and evaluating at $\delta=0$, we get 
$$\left(X_{\gamma_0} + \sum_{j=1}^{r-1} d_j ad(-y \cdot X)^j X_{\gamma_0}\right)\phi(\xi)+O(|y|^{r}).$$ Since $\phi$ is arbitrary, the tangent vector of the curve in the lemma is given by
$$X_{\gamma_0} + \sum_{j=1}^{r-1} d_j ad(-y \cdot X)^j X_{\gamma_0}+O(|y|^{r})$$
around $y=0$, which has the desired form.
\end{proof}

\begin{proof}[Proof of Lemma~\ref{lem:CH1}]
For each small $s$, let $\gamma_s(\delta)$ be the curve $$ \gamma_s(\delta)= \exp(-sX_1) \exp(\delta X_2) \exp(sX_1) \xi $$ with $\gamma_s(0)=\xi$. Its tangent vector at $\delta = 0$ can be calculated by the Campbell-Hausdorff formula as in the proof of Lemma~\ref{lem:CH2}: in fact $$\gamma_s'(0)=X_2 + \sum_{j=1}^{r-1} d_j s^j ad(X_1)^j X_2 + O(|s|^{r}).$$ Hence the tangent vector of the curve in the lemma at $\delta = 0$ is 
$$ - \frac{dS}{d\delta}(0) X_1 + \gamma_s'(0) = - \frac{dS}{d\delta}(0) X_1 + X_2 + \sum_{j=1}^{r-1} d_j s^j ad(X_1)^j X_2 + O(|s|^{r})$$
which has the desired form.
\end{proof}

\section{Gagliardo-Nirenberg inequality for $\dbarb$}

We are now ready to prove our $L^1$ estimates for $\dbarb$. The proof is by duality as in \cite{MR2122730}. The new ingredient here is a localization to small coordinate patches where Theorem~\ref{thm:PLA} applies, with $X_1, \dots, X_{2n}$ being the real and imaginary parts of the anti-holomorphic vector fields $\Zbar_1, \dots, \Zbar_n$. We also need to use the regularity on $L^Q$ of the relative fundamental solutions of $\dbarb$, $\dbarb^*$ and $\boxb$; this is provided by the result of Koenig \cite{MR1879002} on maximal subellipticity when $M$ is of finite commutator type and satisfies condition $D(q_0)$, and by classical results when $M$ satisfies condition $Y(q)$.

\begin{proof}[Proof of Theorem~\ref{thm:PLB}]  
To prove (a), let $u$ be a smooth $(0,q)$-form be orthogonal to the kernel of $\boxb$ where $q_0 \leq q \leq n-q_0$ and $q \ne 1$ nor $n-1$. By duality, it suffices to prove that $$\left|\langle u, \phi \rangle\right| \leq C \left(\|\dbarb u\|_{L^1(M)} + \|\dbarb^* u\|_{L^1(M)}\right) \|\phi\|_{L^Q(M)}$$ for all smooth $(0,q_0)$-forms $\phi$ where $Q = 2n + m$. To do so, note that by Hodge decomposition,
$$
\langle u, \phi \rangle = \langle \dbarb u, \dbarb K_{q} \phi \rangle + \langle \dbarb^* u, \dbarb^* K_{q} \phi \rangle
$$ 
where $K_q$ is the relative solution operator for $\boxb$ on $(0,q)$ forms.
To estimate this, recall that near each point, there is a neighborhood $U$ on which a local frame of holomorphic tangent vectors $Z_1,\dots,Z_n$ is defined, and that the conclusion of Theorem~\ref{thm:PLA} holds for $\phi$ supported there. Since $M$ is compact, we can cover it by finitely many such charts, and let $\sum_{\alpha} \eta_{\alpha}^2 = 1$ be a partition of unity subordinate to it. We shall estimate $\langle \eta_{\alpha} \dbarb u, \eta_{\alpha} \dbarb K_{q}\phi \rangle$ for each $\alpha$: Let $\omega_1, \dots, \omega_n$ be a dual frame of (0,1) forms to $Z_1, \dots, Z_n$ on the support of $\eta_{\alpha}$, and write $$\dbarb u = \sum_{|I|=q+1} (\dbarb u)_I \overline{\omega}^I$$ there. Since $q \ne n-1$, either $q = n$ in which case $\dbarb u = 0$ and we have a trivial estimate, or $\dbarb u$ is a $(0,q+1)$ form with $q+1<n$, so for each multiindex $I$ with $|I| = q+1$, there exists an index $j$ that does not appear in $I$. Since $\dbarb \dbarb u = 0$, on the support of $\eta_{\alpha}$ we have
$$ \overline{Z}_j (\dbarb u)_I = \sum_{k \in I} \pm \overline{Z}_k (\dbarb u)_{jI_k} + O(\dbarb u)$$
where $I_k$ is $I$ with $k$ removed, and $O(\dbarb u)$ represent terms that are 0th order in components of $\dbarb u$. Now write $$\overline{Z}_j = X_j + i X_{n+j}$$ where $X_j$ and $X_{n+j}$ are the real and imaginary parts of $\overline{Z}_j$ respectively. Then 
$$
X_j (\eta_{\alpha} \dbarb u)_I + X_{n+j}  (i\eta_{\alpha} \dbarb u)_I + \sum_{k \in I} \pm X_k (\eta_{\alpha}\dbarb u)_{jI_k} \pm X_{n+k} (i\eta_{\alpha}\dbarb u)_{jI_k} = O(\dbarb u).
$$
Note that at any point, the non-isotropic dimension attached to the real vector fields $X_1, \dots, X_{2n}$ is at most $Q = 2n+m$, because the missing direction $iT$ can be generated by at most $m$ brackets of these vector fields (in other words, in the notations of Theorem~\ref{thm:PLA}, $n_1 = 2n$ and $n_{j_0} = 1$ for some $2 \leq j_0 \leq m$, with all other $n_j$ being zero). 
Since the conclusion of Theorem~\ref{thm:PLA} holds in the support of $\eta_{\alpha}$, we have
\begin{align*}
\left| \int_M \eta_{\alpha} (\dbarb u)_I \overline{\eta_{\alpha} (\dbarb K_{q} \phi)_I} d\text{vol}_g \right| 
&\leq C \|\dbarb u \|_{L^1(M)} \|\eta_{\alpha} \dbarb K_{q}\phi \|_{NL_1^Q(M)}\\ 
&\leq C \|\dbarb u \|_{L^1(M)} \|\phi\|_{L^Q(M)},
\end{align*}
the last estimate following from Theorem 5.12 of Koenig~\cite{MR1879002} on the regularity of $K_q$. Here we used the facts that $M$ is pseudoconvex CR manifold of real dimension $\geq 5$, that $\dbarb$ has closed ranges on $L^2$ on all forms, that $M$ is of finite commutator type and that $M$ satisfies condition $D(q_0)$. This proves the desired estimate for $|\langle \dbarb u, \dbarb K_{q}\phi \rangle|$, and a similar calculation establishes the desired estimate for $|\langle \dbarb^* u, \dbarb^* K_{q}\phi \rangle|$.

Similarly, to prove (b), if $v$ is a smooth $(0,q_0-1)$ form on $M$ orthogonal to the kernel of $\dbarb$, then $$\langle v, \phi\rangle = \langle \dbarb v, G_{q_0-1}' \phi\rangle$$ for all smooth $(0,q_0-1)$ forms $\phi$, where $G_{q_0-1}'$ is the relative fundamental solution of $\dbarb^* \colon L^2(\Lambda^{0,q_0}) \to L^2(\Lambda^{0,q_0-1})$. Note now $q_0 \leq \frac{n}{2} < n$, so $q_0 - 1 \ne n-1$ and $\dbarb v$ is not a top form. It follows that every component of $\dbarb v$ satisfies some divergence type condition as above. Using an argument similar to the one above, and Corollary 5.13 of Koenig~\cite{MR1879002} on the regularity of $G_{q_0-1}'$ under our assumptions on $M$, we then get $$|\langle v, \phi\rangle| \leq C \|\dbarb v \|_{L^1(M)} \|\phi\|_{L^Q(M)},$$ and the desired estimate follows.

The proof of (c) is similar to (b), except that we write, for smooth $(0,n-q_0+1)$ form $w$ orthogonal to the kernel of $\dbarb^*$, that $$\langle w, \phi\rangle = \langle \dbarb^* w, G_{n-q_0} \phi\rangle$$ for all smooth $(0,n-q_0+1)$ forms $\phi$, where $G_{n-q_0}$ is the relative fundamental solution to $\dbarb \colon L^2(\Lambda^{0,n-q_0}) \to L^2(\Lambda^{0,n-q_0+1})$, and use that $\dbarb^* w$ is not a function instead. The required regularity for $G_{n-q_0}$ is again guaranteed by Corollary 5.13 of Koenig~\cite{MR1879002}.
\end{proof}

\begin{proof}[Proof of Theorem~\ref{thm:PLC}]
The proof is very similar to Theorem~\ref{thm:PLB} above. In fact the key ingredients to the proof of Theorem~\ref{thm:PLB} are the Hodge decompositions for $\boxb$, $\dbarb$ and $\dbarb^*$, and the corresponding maximal subelliptic estimates as given by the theorem of Koenig. We have all these when $M$ satisfies condition $Y(q)$ instead; in fact then $\dbarb$ satisfies a subelliptic $\frac{1}{2}$ estimate and $K_q$ gains 2 derivatives in the good directions (see Folland-Kohn \cite{MR0461588} and Rothschild-Stein \cite{MR0436223}). The details of the proof are omitted.
\end{proof}

\section*{Acknowledgements}

I would like to thank my adviser E. M. Stein for suggesting this problem, and for his constant support throughout the project. 

\bibliographystyle{mrl}
\bibliography{div_curl_paper}

\end{document}